\title{\large{On noncompactness of the $\dbar$-Neumann problem\\ on pseudoconvex domains in $\C^3$}}
\author{Gian Maria Dall'Ara}
\address{Universit\"at Wien, Vienna, Austria}
\email{gianmaria.dallara@univie.ac.at}
\thanks{Research supported by FWF-project P28154.}
\date{\today}
\newcommand{\R}{\mathbb{R}}
\newcommand{\N}{\mathbb{N}}
\newcommand{\C}{\mathbb{C}}
\newcommand{\OO}{\mathcal{O}}
\newcommand{\dbar}{\overline{\partial}}
\newcommand{\be}{\begin{equation*}}
\newcommand{\ee}{\end{equation*}}
\newcommand{\bel}{\begin{equation}}
\newcommand{\eel}{\end{equation}}
\newcommand{\bee}{\begin{eqnarray*}}
\newcommand{\eee}{\end{eqnarray*}}
\newcommand{\eps}{\varepsilon}
\newtheorem{thm}{Theorem}
\newtheorem{lem}[thm]{Lemma}
\newtheorem{prop}[thm]{Proposition}
\begin{document}

\maketitle

\begin{abstract}
We prove that a smooth bounded pseudoconvex domain $\Omega\subseteq\C^3$ with a one-dimensional complex manifold $M$ in its boundary has a noncompact Neumann operator on $(0,1)$-forms, under the additional assumption that $b\Omega$ has finite regular D'Angelo $2$-type at a point of $M$.
\end{abstract}

\section{Introduction and main result}

In this paper we deal with the following question, raised for instance in \cite{fu-straube-comp}: is it true that any bounded smooth pseudoconvex domain in $\C^n$ whose boundary contains a $q$-dimensional complex manifold $M$ necessarily has a noncompact $\dbar$-Neumann operator $N_q$ ($1\leq q\leq n-1$)? We refer the reader to \cite{straube-book} and \cite{fu-straube-comp} for the relevant background on the $\dbar$-Neumann problem in general, and the importance of compactness in particular.

Before stating our own contribution, let us list the main results presently known.

\begin{itemize}

\item[(a)] Krantz discussed in \cite{krantz-comp} the case of the bidisc in $\C^2$ (which strictly speaking is not smooth, but one may round the edges to fall under our assumptions). This is somehow the archetipal example for failure of compactness, since all the arguments known to establish it show first that the domain in consideration has a local ``almost-product'', or more precisely ``fibered'', structure (see Section \ref{structure-sec} below, in particular the discussion after Lemma \ref{bedford-fornaess-lemma}).

\item[(b)] The answer is affirmative in $\C^2$, as proved for general Lipschitz domains in \cite{fu-straube-comp}. The authors of that paper claim that this result has been part of the folklore for many years, with unpublished proofs by Catlin and Christ.

\item[(c)] Matheos exhibited a $2$-dimensional bounded smooth pseudoconvex Hartogs domain with no $1$-dimensional complex manifold in the boundary, which nonetheless has noncompact $\dbar$-Neumann operator (see \cite{matheos} or Theorem $4.25$ of \cite{straube-book}), disproving the reverse implication to the one we are considering. Christ and Fu (see \cite{christ-fu}) later fully characterized compactness of $N_1$ on $2$-dimensional bounded smooth pseudoconvex Hartogs domains.

\item[(d)] Fu and Straube showed in \cite{fu-straube-comp} that on bounded locally convexifiable domains (in any dimension) $N_q$ is compact if and only if there are no $q$-dimensional complex manifolds in the boundary (see also section $4.9$ of \cite{straube-book}). 

\item[(e)] \c{S}ahuto\u{g}lu and Straube proved failure of compactness of $N_q$ for smooth pseudoconvex domains in $\C^n$ with a $q$-dimensional manifold $M$ in the boundary, under the additional assumption that at a point of $M$ the boundary is strictly pseudoconvex in the directions transverse to $M$ (see \cite{sahutoglu-straube} for the case $q=1$, and \cite{sahutoglu-thesis} or Theorem $4.21$ of \cite{straube-book} for the general case). Notice that if $q=n-1$ this hypothesis is empty and the problem is completely solved.

\end{itemize}

 The transversality condition in the last result is, in the words of one of the authors, ``somewhat curious'' as ``one would expect a flatter boundary to be even more conducive to noncompactness'' (p. 100 of \cite{straube-book}). Our goal in the present paper is to weaken significantly this assumption in the $3$-dimensional case (for $q=1$, since the case $q=2$ is settled), requiring only a finite-type condition in the directions transverse to the complex submanifold. More precisely, we prove the following:

\begin{thm}\label{main-thm}
Let $\Omega\subseteq\C^3$ be a bounded smooth pseudoconvex domain with a one-dimensional complex manifold $D$ in the boundary. Assume that there exists $P\in D$ such that the maximum order of contact of any $2$-dimensional complex manifold with $b\Omega$ at $P$ is bounded. Then the Neumann operator $N_1$ is not compact.
\end{thm}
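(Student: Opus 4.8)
The plan is to construct an explicit Weyl-type sequence of $(0,1)$-forms witnessing noncompactness of $N_1$, following the general strategy pioneered in \cite{sahutoglu-straube} but adapted to the weaker finite-type hypothesis. Recall that, by the characterization of compactness, $N_1$ fails to be compact if and only if one can produce a sequence $\{u_k\}$ of $\dbar$-closed $(0,1)$-forms, bounded in $L^2$, with no $L^2$-convergent subsequence, such that $\|\dbar^* N_1 u_k\|$ does not go to zero; equivalently (and more usefully in practice) it suffices to violate the compactness estimate by exhibiting forms $u_k$ in the domain of $\dbar^*$ with $\|u_k\|\sim 1$, $\|u_k\|$ not going to zero in any weak sense, and $\|\dbar u_k\|+\|\dbar^* u_k\|$ small, concentrating near the point $P\in D$.

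First I would use the finite-type hypothesis at $P$ together with pseudoconvexity to set up good local coordinates. The assumption that no $2$-dimensional complex manifold has infinite order of contact with $b\Omega$ at $P$, combined with the fact that the $1$-dimensional manifold $D\subseteq b\Omega$ \emph{does} pass through $P$, should force the boundary to look, after a biholomorphic change of coordinates, like a graph $\mathrm{Re}(z_3) = \psi(z_1, z_2, \mathrm{Im}\,z_3)$ where $D$ is (a piece of) $\{z_2 = z_3 = 0\}$ and $\psi$ vanishes to finite order in the $z_2$ variable along $D$. The key structural input, as the excerpt hints in part (a), is that near $P$ the domain acquires a \emph{fibered} structure over $D$: for each fixed $z_1$ near $0$, the slice is (comparable to) a finite-type domain in the $(z_2, z_3)$ variables whose type is controlled uniformly. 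I would make this precise using the structure results of Section \ref{structure-sec} (the Bedford--Fornaess type lemma referenced as Lemma \ref{bedford-fornaess-lemma}), obtaining a plurisubharmonic function and a family of slice domains $\Omega_{z_1}$ that degenerate to the disc $D$ as one moves along $D$ toward its boundary, but whose transverse finite type stays bounded.

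Next I would build the Weyl sequence by separation of variables: take $u_k = f_k(z_1)\, \overline{g_k}\, d\bar z_1$ (roughly), where $g_k$ is an $L^2$-normalized holomorphic function on the slice that is highly concentrated — using the finite-type bound in the transverse directions to get, via known subelliptic or Kohn-type estimates on the slices (or just the $\dbar$-Neumann lower bounds for finite-type domains in $\C^2$), that such concentrated holomorphic functions exist with controlled mass — and $f_k$ is a cutoff along $D$ supported where the slice is close to the limiting disc $D$. The point of the finite-type assumption is exactly that it gives a \emph{uniform} lower bound, independent of $k$ and of the slice, preventing the would-be singular sequence from being smoothed out; this replaces the strict pseudoconvexity in the transverse directions used in \cite{sahutoglu-straube}, where the transverse holomorphic functions were simply Gaussians. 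One then checks that $\dbar u_k$ is controlled by derivatives of the cutoff $f_k$ and hence small, that $u_k$ lies in $\mathrm{Dom}(\dbar^*)$ (choosing the boundary behavior of $g_k$ appropriately, e.g.\ using the Bergman projection on the slices so the normal component vanishes), that $\dbar^* u_k$ is small for the same cutoff reason, and that $\{u_k\}$ has no convergent subsequence because the supports march off along $D$ (or the functions oscillate) while the norms stay bounded below.

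The main obstacle, and the heart of the paper, will be the \emph{uniformity} of the transverse estimates: one must show that the finite-type bound at the single point $P$ propagates to a uniform bound on the relevant quantities — Bergman kernel sizes, widths of concentrated holomorphic functions, or subelliptic gains — across the whole family of slice domains $\Omega_{z_1}$ as $z_1$ ranges over a neighborhood of $P$ in $D$, including the degenerate limit where the slice collapses onto the disc. This is delicate because the slices are genuinely varying two-dimensional finite-type domains, their geometry is not uniformly controlled a priori just from a pointwise type bound, and one has to rule out the type "escaping to infinity" along $D$ or the slices becoming too thin too fast. I would handle this by extracting from the finite-type hypothesis a quantitative local defining-function estimate (a lower bound of the form $\psi(0,z_2,0) \gtrsim |z_2|^{2m}$ for some finite $m$) that persists under the fibering, and then invoking the scale-invariant (Catlin-type) estimates for finite-type domains in $\C^2$ slice by slice with constants depending only on $m$ and on uniform $C^k$ bounds for $\psi$. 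Modulo this uniformity, assembling the Weyl sequence and verifying it violates the compactness estimate is then a fairly standard, if technical, computation.
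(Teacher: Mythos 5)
Your overall strategy (exploit a fibered structure near $P$ and produce a singular sequence violating compactness) points in the right general direction, but there are two concrete gaps, and the second is the heart of the matter. First, the structural input you posit is stronger than what the hypothesis gives and partly goes in the wrong direction. Finite regular $2$-type at the \emph{single} point $P$ does not yield slices $\Omega_{z_1}$ of uniformly bounded finite type, nor a lower bound $\psi(0,z_2,0)\gtrsim|z_2|^{2m}$: the slice through $P$ itself may well be of infinite type, and no pointwise lower bound at $P$ follows. What the argument actually needs, and what Lemma \ref{structure-lemma} provides after a nontrivial induction (Lemma \ref{inductive-lemma}), is a simultaneous normal form along $D$: all transverse derivatives $\partial_{z_2}^j\partial_{\overline z_2}^k\varphi(z_1,0,0)$ of order $<\tau$ vanish for \emph{every} $z_1$, together with one good point $z^*$ (in general $\neq 0$) where an order-$\tau$ mixed derivative survives. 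Establishing this requires pseudoconvexity in an essential way at each step of the induction --- the odd-degree homogeneous terms vanish because a nonnegative odd-degree homogeneous polynomial is zero, and the pure terms $\partial_{z_2}^{m+1}\varphi_m(\cdot,0,0)$ are \emph{holomorphic in $z_1$} (a consequence of plurisubharmonicity of the limit under anisotropic scaling), which is exactly what allows them to be removed by a biholomorphism defined uniformly in $z_1$; the finite $2$-type hypothesis only enters to show the iteration halts. Moreover, the estimate one then uses is the uniform \emph{upper} bound $\varphi\leq C|z_2|^\tau+C|\Im(z_3)|$, whose role is to give the inclusion $D'\times\Omega_2\subseteq\widetilde\Omega$ of a product model \emph{inside} the domain --- not a lower bound controlling the slices from below.

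Second, the transfer of noncompactness from the model to $\Omega$ cannot be dismissed as a ``fairly standard computation'' with cutoffs. Forms of the shape $f_k(z_1)\,\overline{g_k}\,d\overline z_1$ built from slice-holomorphic functions are not in $\mathrm{Dom}(\dbar^*)$ of the true domain (the condition $u_1\partial_{z_1}\rho=0$ on $b\Omega$ fails off $D$), and the error terms in $\dbar u_k$, $\dbar^*u_k$ are controlled precisely by the order of tangency between the model and $b\Omega$ --- which is the quantitative issue your sketch does not engage. The paper circumvents this entirely: it argues by contradiction, shows the restriction operator $A(\Omega_1)\to A(\Omega_2)$ is noncompact via Lemma \ref{noncpt-lemma}, whose key hypothesis $K_2(P_n,P_n)\lesssim K_1(P_n,P_n)$ is verified by \emph{matching} Bergman kernel exponents --- Catlin's two-sided asymptotics $K_1\approx\delta^{-2-2/\tau}$ on the good slice of type exactly $\tau$ (Theorem \ref{catlin-thm}) against the elementary upper bound $K_2\lesssim\delta^{-2-2/\tau}$ on the model (Proposition \ref{bergman-model-prop}); it then extends the resulting functions by Ohsawa--Takegoshi, uses the (assumed) compact canonical solution operator on $\Omega\cap V$, and averages in $z_1$ against a test function to conclude the restricted sequence is Cauchy, a contradiction. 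This is why the normal form must reach exactly the transverse order $\tau$: otherwise the two kernel exponents do not match and the key hypothesis of the noncompactness lemma fails. Your proposal would need either this machinery or a genuinely worked-out substitute for the $\mathrm{Dom}(\dbar^*)$ and tangency issues; as written, the decisive steps are missing.
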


A few comments may help to clarify the scope of Theorem \ref{main-thm}.

\begin{itemize}
\item[(a)] The boundedness of the order of contact of complex hypersurfaces with $b\Omega$ at $P$ means that there exists a $k_0<+\infty$ for which the following holds: if $U$ is a neighborhood of $0$ in $\C^2$, $h:U\rightarrow\C^3$ is a holomorphic mapping with Jacobian of rank $2$ at $0$ such that $h(0)=P$, and \be
\rho(h(w))=\OO(|w|^k)\qquad\forall w\in U,
\ee then $k\leq k_0$. Here $\rho$ is any defining function of $\Omega$. This condition is called \emph{finite regular $2$-type} by D'Angelo (see, e.g., \cite{dangelo-kohn}).
\item[(b)] Since $\Omega$ contains a one-dimensional complex manifold passing through $P$, our assumption is really a finite-type assumption in the directions transverse to $D$. See Lemma \ref{structure-lemma} below for a precise statement.
\item[(c)] The most dramatic way in which our assumption may be violated is if a $2$-dimensional complex manifold containing $D$ lies in the boundary, a condition equivalent to what is known as \emph{non-minimality} of the boundary (at the point $P$) in CR geometry (see Theorem $1.5.15$ of \cite{ber-book}). Notice, however, that if this is the case the result of \c{S}ahuto\u{g}lu and Straube  may be applied to conclude that $N_2$, the $\dbar$-Neumann operator on $(0,2)$-forms, fails to be compact, and this implies the non-compactness of $N_1$ too (Proposition $4.5$ of \cite{straube-book}). 

\item[(d)] Another way to look at this result is to observe that to give a positive answer to the problem we stated at the beginning of the introduction, one would like to replace our assumption with compactness of $N_2$ (again because of the percolation of compactness up the $\dbar$-complex). \newline By results of Kohn \cite{kohn} and Greiner \cite{greiner}, our hypothesis is instead equivalent to subellipticity at $P$ at the level of $(0,2)$-forms, which is slightly stronger.

\item[(e)] A hypothetical counterexample, i.e., a domain with a one-dimensional complex manifold $M$ in its boundary and compact Neumann operator on $(0,1)$-forms, must have the property that $M$ is made of minimal points satisfying an infinite-type condition transversally. Notice that the boundary of such a domain cannot be locally real-analytic around points of $M$. 
\end{itemize}

\subsection{Structure of the paper}
The rest of the paper is devoted to the proof of Theorem \ref{main-thm}. It follows the ``plan'' of \cite{sahutoglu-straube} and \cite{fu-straube-conv}, aiming to insert a smaller domain inside $\Omega$ that is tangent to $b\Omega$ along $D$ and has a product structure in appropriate coordinates. Morally, it is this product structure that causes the failure of compactness, but one must choose the smaller domain in such a way that its boundary has a sufficiently high order of contact with $b\Omega$, in order to transfer the non-compactness to the original domain $\Omega$. Under the assumptions of \cite{sahutoglu-straube}, the minimal order of contact $2$ is enough, and the construction boils down to a local structure lemma for domains with complex manifolds in the boundary which, in the $q=1$ case, follows from a result of Bedford and Forn\ae ss (Lemma $1$ of \cite{bedford-fornaess}). The key step and the main novelty in our proof is a refinement of this lemma in dimension $3$ (under our more general finite $2$-type assumption) that allows to increase this order of contact up to any finite value. Section \ref{structure-sec} is devoted to its proof, that is a sort of parametric version of a normalization procedure used in the study of $2$-dimensional finite-type pseudoconvex domains (see, e.g., \cite{catlin-invariant}). It relies on an anisotropic scaling argument of a kind often used in harmonic and complex analysis. For the sake of clarity, we devote Appendices \ref{aniso-app} and \ref{pscvx-app} to a rapid discussion of a few results we need in our proof.

The key technical tool to transfer non-compactness of $N_1$ from the smaller domain to the original one is a ``noncompactness'' lemma, of which we give in Section \ref{noncpt-sec} the most appropriate formulation for our argument. The assumptions of the lemma involve the behaviour on the diagonal of the Bergman kernel of a domain and a tangent subdomain one dimension less than the one for which one wants to establish non-compactness. The main reason for the hypothesis in our theorem is that Bergman kernels are very well understood in dimension $2$ under a finite-type assumption, thanks to work of Catlin, as we recall in Section \ref{bergman-sec}, while a general understanding lacks close to infinite type points and in higher dimension (despite significant progress: see, e.g., \cite{mcneal-stein-bergman}, \cite{charpentier-dupain-bergman}, and \cite{nagel-pramanik-diagonal}).

In Section \ref{proof-sec} we put all the ingredients together and complete the proof.

\section{Local structure of a 3-dim. pseudoconvex domain with a one-dimensional complex manifold in the boundary}\label{structure-sec}

The starting point of the argument of \cite{sahutoglu-straube} is the following local structure lemma for pseudoconvex domains whose boundary contains a $q$-dimensional complex manifold (Lemma $4.22$ of \cite{straube-book}). 

Here and in what follows we employ the notation $\Re(z)$ and $\Im(z)$ for the real and imaginary parts of a complex variable $z$, and $\partial_z$ (resp. $\partial_{\overline{z}}$) for $\frac{\partial_{\Re(z)}-i\partial_{\Im(z)}}{2}$ (resp.  $\frac{\partial_{\Re(z)}+i\partial_{\Im(z)}}{2}$).

\begin{lem}\label{bedford-fornaess-lemma}
Let $\Omega\subseteq\C^n$ be a smooth pseudoconvex domain with a $q$-dimensional complex manifold $M$ in its boundary. If $P\in M$ there exists a neighborhood $V$ of $P$ in $\C^n$ and a biholomorphism $\Phi:V\rightarrow\Phi(V)\subseteq\C^n$ such that: \begin{enumerate}
\item[(i)] $\Phi(P)=0$,
\item[(ii)] $\Phi(M\cap V)=\Phi(V)\cap \{z_{q+1}=\cdots=z_n=0\}$,
\item[(iii)] the exterior normal to $\Phi(\Omega\cap V)$ at points of $\Phi(V)\cap \{z_{q+1}=\cdots=z_n=0\}$ is given by the negative $\Re(z_n)$-axis. 
\end{enumerate}
\end{lem}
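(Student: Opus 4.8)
The plan is to produce the biholomorphism $\Phi$ in two successive moves: first straighten the complex manifold $M$, then use pseudoconvexity to normalize the exterior normal along it. Since $M$ is a $q$-dimensional complex submanifold of $\C^n$ with $P\in M$, after a preliminary affine change of coordinates (translating $P$ to $0$ and rotating) I may assume that $T_P M = \{z_{q+1}=\cdots=z_n=0\}$. By the implicit function theorem for holomorphic maps, near $P$ the manifold $M$ is a holomorphic graph: there exist holomorphic functions $g_{q+1},\dots,g_n$ of $(z_1,\dots,z_q)$, defined near the origin, vanishing to second order at $0$, such that $M$ is locally $\{z_j = g_j(z_1,\dots,z_q), \ j=q+1,\dots,n\}$. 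The map $\Phi_1(z) = (z_1,\dots,z_q,\ z_{q+1}-g_{q+1}(z_1,\dots,z_q),\dots,\ z_n-g_n(z_1,\dots,z_q))$ is then a biholomorphism of a neighborhood of $0$ fixing $0$, and it carries $M$ onto an open subset of $\{z_{q+1}=\cdots=z_n=0\}$, giving (i) and (ii). Note $\Phi_1$ is the identity on the first $q$ coordinates and linear-plus-holomorphic in the rest, so it is genuinely biholomorphic near $0$.

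After applying $\Phi_1$ I may assume $M$ is already flat, i.e.\ $M\cap V = V\cap\{z_{q+1}=\cdots=z_n=0\}$. Now I bring in the defining function $\rho$ of $\Omega$. The exterior normal to $\Omega$ at a point $p\in M$ is the direction of $\nabla\rho(p)$ (real gradient, up to positive scaling). For each such $p$, the real tangent space $T_p(b\Omega)$ contains the complex tangent space $T_p M$ together with its conjugate, i.e.\ it contains the $2q$ real directions spanned by $\partial_{z_1},\dots,\partial_{z_q}$ and their conjugates; hence $\nabla\rho(p)$ has no component in those directions, i.e.\ $\partial_{z_j}\rho(p) = 0$ for $1\le j\le q$ and all $p\in M\cap V$. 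Thus along $M$ the complex gradient of $\rho$ lies in the span of $dz_{q+1},\dots,dz_n$. The remaining task is to perform a holomorphic change of coordinates in the variables $z_{q+1},\dots,z_n$, depending holomorphically on $z_1,\dots,z_q$ as parameters, that rotates this gradient so that it points along the negative $\Re(z_n)$-axis at every point of $M$. After a linear change (a $z_1,\dots,z_q$-dependent unitary rotation of the last $n-q$ coordinates, composed with an overall constant unitary so the target is $-\Re(z_n)$) one arranges $\partial_{z_n}\rho \ne 0$ and $\partial_{z_j}\rho = 0$ for $q+1\le j\le n-1$ along $M$; here is where one must check the rotation can be taken holomorphic in the parameters, which follows because $(\partial_{z_{q+1}}\rho,\dots,\partial_{z_n}\rho)$ is a smooth nonvanishing vector along $M$ and $M$ is simply connected locally, but one has to be a little careful because $\rho$ is only smooth, not holomorphic. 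The cleaner route, and the one I would actually follow, is to invoke \emph{pseudoconvexity}: it is a classical fact (this is precisely Lemma~$1$ of Bedford--Forn\ae ss for $q=1$, and Lemma~$4.22$ of \cite{straube-book} in general) that pseudoconvexity forces $\rho$, restricted to the normal directions along $M$, to have a well-behaved Hessian, which lets one produce the holomorphic normalizing change of coordinates. So rather than reprove it I would cite this and simply note that composing the straightening map $\Phi_1$ with the map supplied by that lemma yields $\Phi$.

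Let me restate the structure so it is self-contained: set $\Phi = \Phi_2\circ\Phi_1$, where $\Phi_1$ straightens $M$ as above and $\Phi_2$ is the biholomorphism (identity on $z_1,\dots,z_q$, holomorphically normalizing the last coordinates) furnished by pseudoconvexity so that the exterior normal along the flattened $M$ becomes the negative $\Re(z_n)$-axis. Then (i) and (ii) are preserved because $\Phi_2$ fixes $0$ and maps $\{z_{q+1}=\cdots=z_n=0\}$ into itself (it acts only on those coordinates, with no constant term), while (iii) holds by construction of $\Phi_2$. Shrinking $V$ if necessary so that both maps are defined and biholomorphic onto their images completes the proof.

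The main obstacle, and the only non-formal point, is the holomorphicity of the normalizing rotation $\Phi_2$ in the parameters $z_1,\dots,z_q$: a priori a pointwise rotation aligning $\nabla\rho$ with a fixed axis is only as smooth as $\rho$, and one needs it \emph{holomorphic} to get a biholomorphism. This is exactly where pseudoconvexity is essential and where the Bedford--Forn\ae ss argument does its work — it shows that the relevant coefficient functions extend holomorphically — so in a self-contained write-up I would either reproduce that argument or, as above, cite Lemma~$4.22$ of \cite{straube-book} directly and only supply the reduction $\Phi_1$ explicitly.
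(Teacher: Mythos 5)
First, a point of reference: the paper offers no proof of this lemma at all --- it is quoted verbatim as a known result (Lemma $4.22$ of \cite{straube-book}, going back for $q=1$ to Lemma $1$ of \cite{bedford-fornaess}), so there is no internal argument to compare yours against. Your first step is fine: flattening $M$ by an affine normalization of $T_PM$ followed by the holomorphic shear $\Phi_1$ is standard and correct, as is the observation that $\rho\equiv 0$ on $M$ forces $\partial_{z_j}\rho=0$ along the flattened $M$ for $j\le q$ (note this uses only $M\subseteq b\Omega$, not pseudoconvexity).

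The problem is that your write-up is circular at the only nontrivial point: to produce $\Phi_2$ you propose to cite Lemma $4.22$ of \cite{straube-book}, which \emph{is} the statement being proved. Your fallback construction --- a $(z_1,\dots,z_q)$-dependent unitary rotation of the last $n-q$ coordinates aligning $a(z'):=(\partial_{z_{q+1}}\rho,\dots,\partial_{z_n}\rho)|_{(z',0)}$ with a fixed axis --- fails for exactly the reason you flag yourself: $a$ is merely smooth in $z'$, so that rotation is not holomorphic and $\Phi_2$ would not be a biholomorphism. The gap is the actual content of the Bedford--Forn\ae ss argument, of which your phrase ``well-behaved Hessian'' is not a substitute. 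Concretely: since $\rho$ vanishes identically on the flattened $M$, the Levi form of $\rho$ vanishes on $T^{1,0}M\subseteq T^{1,0}b\Omega$; positive semidefiniteness and Cauchy--Schwarz then place $T^{1,0}M$ in the kernel of the Levi form along $M$, which unwinds to $\partial_{\overline{z}_k}a(z')$ being parallel to $a(z')$ for $k\le q$. Hence the projectivized complex normal $[a_{q+1}:\cdots:a_n]$ depends holomorphically on $z'$, and after a constant linear change making $a_n\neq0$ the ratios $a_j/a_n$ are holomorphic and can be used to build part of $\Phi_2$. Even then one must still normalize the \emph{phase} of $a_n$ by a holomorphic factor to make the normal a fixed real axis (your condition (iii) at every point of $M$, not just at $P$); this last normalization is precisely where the finer part of Bedford--Forn\ae ss's argument is needed and is not addressed in your proposal at all. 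So either reproduce that argument, or cite \cite{bedford-fornaess}/\cite{sahutoglu-straube} as an external source --- which is what the paper does --- but do not present the citation of the statement itself as a proof.
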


Before going further, let us rephrase the lemma in a somehow more explicit fashion in the $n=3$, $q=1$ case. Given $\Omega$ and $P$ as above there exist:
\begin{enumerate}
\item $t_1>0$, a disc $D_1\subseteq\C$ and a ball $B_1\subseteq\C\times\R$, both centered at $0$,
\item a smooth function $\varphi_1:D_1\times B_1\rightarrow(-t_1,t_1)$ such that $\varphi_1$ and its gradient $\nabla\varphi_1$ vanish identically on $D_1\times\{0\}$,
\item a neighborhood $V$ of $P$ in $\C^3$ and a biholomorphism $\Phi_1$ of $V$ onto the set $U_1:=\{z\in\C^3\colon z_1\in D_1,\ (z_2,\Im(z_3))\in B_1,\ |\Re(z_3)|<t_1\}$ such that $\Phi_1(P)=0$ and \be
\Omega_1:=\Phi_1(\Omega\cap V)=\{z\in U_1 \colon \Re(z_3)>\varphi_1(z_1,z_2,\Im(z_3))\}.
\ee 
\end{enumerate}

In other words, in the appropriate local coordinates the domain is fibered over a disc and every fiber is a pseudoconvex domain in $\C^2$ ``in normal form up to order $1$'' (i.e., condition (2) above holds). Our next lemma improves this result, allowing to put simultaneously all the fibers in normal form up to higher order, under our finite regular $2$-type condition.

\begin{lem}\label{structure-lemma}
Let $\Omega\subseteq \C^3$ be a smooth pseudoconvex domain with a one-dimensional complex manifold $M$ in its boundary. Assume moreover that $b\Omega$ has bounded order of contact with $2$-dimensional complex manifolds at a point $P\in M$. Then there exist:\begin{itemize}
\item a neighborhood $V$ of $P$ in $\C^3$, 
\item $t>0$, a disc $D\subseteq\C$ and a ball $B\subseteq\C\times\R$, both centered at $0$, 
\item a biholomorphism $\Phi:V\longrightarrow U$, where \be U:=\{z\in\C^3\colon z_1\in D, (z_2,\Im(z_3))\in B, |\Re(z_3)|<t\},\ee
such that $\Phi(P)=0$,
\item a smooth function $\varphi:D\times B\rightarrow(-t,t)$ and an even $\tau\in \N\setminus\{0\}$,
\end{itemize}
such that \begin{enumerate}
\item[(i)] $\Phi(\Omega\cap V)=\left\{z\in U\colon \Re(z_3)>\varphi(z_1,z_2,\Im(z_3))\right\}$,
\item[(ii)] $\partial_{\Im(z_3)}\varphi(z_1,0,0)=0$ for every $z_1\in D$,
\item[(iii)] $\partial^j_{z_2}\partial^k_{\overline{z}_2}\varphi(z_1,0,0)=0$ for every $z_1\in D$ and $j,k$ such that $j+k< \tau$,
\item[(iv)] there exist $z^*\in D$ and $j_0,k_0\geq1$ such that $j_0+k_0=\tau$ and\be\partial^{j_0}_{z_2}\partial^{k_0}_{\overline{z}_2}\varphi(z^*,0,0)\neq0.\ee
\end{enumerate}
\end{lem}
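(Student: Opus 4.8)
The plan is to start from the coordinates provided by Lemma \ref{bedford-fornaess-lemma} in the $n=3$, $q=1$ case (so we already have a defining function of the form $\Re(z_3)-\varphi_1(z_1,z_2,\Im(z_3))$ with $\varphi_1$ and $\nabla\varphi_1$ vanishing on $D_1\times\{0\}$), and then perform a sequence of biholomorphic changes of variable, holomorphic in $z_1$ (i.e. treating $z_1$ as a parameter), that successively remove the ``pluriharmonic-type'' terms in the Taylor expansion of $\varphi_1$ in the $z_2$-variable along $M$. Concretely, for each $z_1$ one would like to absorb into $z_3$ the terms of the expansion of $\varphi_1(z_1,\cdot,\cdot)$ that are of the form $\Re(\text{holomorphic in }z_2)$ plus terms linear in $\Im(z_3)$; this is exactly the classical normalization step for a pseudoconvex hypersurface in $\C^2$ (as in \cite{catlin-invariant}), and carrying it out with smooth dependence on the parameter $z_1$ is what gives a ``parametric'' version. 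After this normalization we arrive at a defining function whose $z_2$-expansion along $M$ starts with a genuine ``Levi-form-and-higher'' part; conditions (ii) and (iii) will be achieved by construction, with $\tau$ defined as the smallest order at which a mixed term $\partial_{z_2}^j\partial_{\bar z_2}^k\varphi$ (with $j,k\geq 1$) fails to vanish identically along $M$, for some $z_1$.

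The two points that require real work are: (1) showing that $\tau$ is \emph{finite}, and (2) showing that $\tau$ can be taken \emph{even} and that the nonvanishing mixed derivative in (iv) has $j_0,k_0\geq 1$ (not a pure term). For (1), this is precisely where the finite regular $2$-type hypothesis enters: if all mixed derivatives $\partial_{z_2}^j\partial_{\bar z_2}^k\varphi$ with $j,k\geq 1$ vanished along $M$ to all orders at some (hence, by a connectedness/analyticity-in-$z_1$-free argument, all) $z_1$, then — after the normalization that already killed the pure/pluriharmonic part — the hypersurface would contain, or be tangent to infinite order to, a $2$-dimensional complex manifold (the graph over $(z_1,z_2)$ obtained by solving $\Re z_3=\varphi$), contradicting the bounded order of contact assumption; more carefully, one constructs a holomorphic map $h:U\subseteq\C^2\to\C^3$ of rank $2$ with $\rho\circ h=\OO(|w|^k)$ for arbitrarily large $k$. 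For (2), pseudoconvexity forces the lowest-order mixed term, viewed as a polynomial in $z_2,\bar z_2$ (for fixed $z_1$), to be subharmonic; a homogeneous subharmonic polynomial that is not pluriharmonic must have even degree and cannot be (anti)holomorphic, which gives the parity of $\tau$ and the lower bounds $j_0,k_0\geq 1$. One also uses subharmonicity to rule out the possibility that the lowest mixed order is realized only by pure terms $\partial_{z_2}^\tau\varphi$ or $\partial_{\bar z_2}^\tau\varphi$.

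A subtlety I would be careful about is the interplay between ``vanishing for some $z_1$'' and ``vanishing for all $z_1\in D$'' in conditions (ii)--(iv): the normalization should be designed so that (ii) and (iii) hold identically in $z_1$ on the whole disc $D$, while (iv) only asserts nonvanishing at a single $z^*$. The cleanest route is to define $\tau$ as $\min_{z_1\in D}\big(\text{order of the first nonzero mixed term at }z_1\big)$ after the normalization, shrink $D$ if necessary so that this minimum is attained and the pure-order-$<\tau$ terms vanish on all of $D$, and then verify that $\tau<\infty$ via the finite-type argument of (1). Here I expect the main obstacle to be making the parameter-dependent normalization genuinely \emph{smooth} (and defined on a full neighborhood, not just formally): one must solve, smoothly in $z_1$, for a holomorphic (in $z_2$) function whose real part matches a prescribed smooth family of expansions, which is where an anisotropic scaling/normal-families argument of the type the author advertises in the introduction — and the auxiliary results of Appendices \ref{aniso-app} and \ref{pscvx-app} — will be needed to control the relevant estimates uniformly as one shrinks the neighborhood.
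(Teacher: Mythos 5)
Your overall strategy coincides with the paper's: start from the Bedford--Forn\ae ss normalization, iterate a parametric Catlin-type normalization in the fiber variable $z_2$, use the finite regular $2$-type hypothesis to halt the iteration (by exhibiting rank-$2$ holomorphic maps $h$ with $\rho\circ h=\OO(|w|^k)$ for arbitrarily large $k$ otherwise), and use pseudoconvexity of the lowest-order homogeneous part to get the parity of $\tau$ and the presence of a genuinely mixed term. All of that matches the paper's proof via Lemma \ref{inductive-lemma}.

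There is, however, one genuine gap, and it sits exactly at the point the paper identifies as its main novelty. You propose to ``solve, smoothly in $z_1$, for a holomorphic (in $z_2$) function whose real part matches a prescribed smooth family of expansions'' and to absorb it into $z_3$. But a map of the form $(z_1,z_2,z_3)\mapsto(z_1,z_2,z_3-c(z_1)\,z_2^{m+1})$ is a biholomorphism of an open set in $\C^3$ only if $c$ is \emph{holomorphic} in $z_1$; smooth dependence on the parameter is not enough, and with merely smooth $c$ the image domain is no longer biholomorphic to a piece of $\Omega$, so the finite-type argument and the eventual transfer of noncompactness both break down. The coefficient in question is $\frac{2}{(m+1)!}\partial_{z_2}^{m+1}\varphi_m(z_1,0,0)$, and there is no a priori reason for it to be holomorphic in $z_1$; this must be \emph{proved}. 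The paper does so by an anisotropic blow-up with exponents $(1,1,m+1)$: the limit of the rescaled pseudoconvex domains is pseudoconvex, hence the homogeneous part $2\Re\{c(z_1)z_2^{m+1}\}$ is plurisubharmonic \emph{jointly} in $(z_1,z_2)$, and the determinant of its complex Hessian equals $-\left|\frac{1}{m!}\partial_{\overline{z}_1}c(z_1)\,z_2^{m}\right|^2$ up to normalization, which can be nonnegative only if $\partial_{\overline{z}_1}c\equiv0$. Your sketch uses subharmonicity only fiberwise in $z_2$ (to get parity and to rule out pure terms), and never extracts this joint plurisubharmonicity in the $z_1$-direction; without it the inductive change of variables cannot be carried out. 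Everything else in your plan is sound and aligns with the paper.
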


The proof of Lemma \ref{structure-lemma} is based on an induction procedure, whose inductive step we isolate as a separate lemma.

\begin{lem}\label{inductive-lemma}
For $m\geq1$ let $\varphi_m:D_m\times B_m\rightarrow(-t_m,t_m)$ be a smooth function, where $t_m>0$, $D_m\subseteq\C$ is a disc and $B_m\subseteq\C\times\R$ is a ball, both centered at $0$. 

Set $U_m:=\{z\in\C^3\colon z_1\in D_m,\ (z_2,\Im(z_3))\in B_m,\ |\Re(z_3)|<t_m\}$ and assume that the domain $\Omega_m:=\{z\in U_m\colon \Re(z_3)>\varphi_m(z_1,z_2,\Im(z_3))\}$ is pseudoconvex at boundary points where $\Re(z_3)=\varphi_m(z_1,z_2,\Im(z_3))$. Assume also that
\bel\label{der0m}
\partial^j_{z_2}\partial^k_{\overline{z}_2}\varphi_m(z_1,0,0)=0\qquad\forall z_1\in D_m,\quad \forall j,k\colon j+k\leq m,
\eel 
\bel\label{derIm}
\partial_{\Im(z_3)}\varphi_m(z_1,0,0)=0 \qquad \forall z_1\in D_m
\eel
and that \bel\label{der0m+1}
\partial^j_{z_2}\partial^k_{\overline{z}_2}\varphi_m(z_1,0,0)=0\qquad\forall z_1\in D_m,\quad \forall j,k\colon j,k\geq1,\quad j+k= m+1.
\eel
If $m$ is even, \eqref{der0m+1} is a consequence of \eqref{der0m} and \eqref{derIm}. 

Then there exist an open set $U_m'\subseteq U_m$, $t_{m+1}>0$, a disc $D_{m+1}\subseteq\C$ and a ball $B_{m+1}\subseteq\C\times\R$, both centered at $0$, a smooth function \be\varphi_{m+1}:D_{m+1}\times B_{m+1}\rightarrow(-t_{m+1},t_{m+1}),\ee and a biholomorphism \be
\Phi_{m+1}: U_m'\longrightarrow U_{m+1}:=\{z\in\C^3\colon z_1\in D_{m+1},\ (z_2,\Im(z_3))\in B_{m+1},\ |\Re(z_3)|<t_{m+1}\}
\ee such that the following holds:\begin{enumerate}
\item[(i)] $\Phi_{m+1}(\Omega_m\cap U'_m)=\Omega_{m+1}:=\{z\in U_{m+1}\colon \Re(z_3)>\varphi_{m+1}(z_1,z_2,\Im(z_3))\}$,
\item[(ii)] $\partial^j_{z_2}\partial^k_{\overline{z}_2}\varphi_{m+1}(z_1,0,0)=0\qquad\forall z_1\in D_{m+1},\quad \forall j,k\colon j+k\leq m+1$,
\item[(iii)] $\partial_{\Im(z_3)}\varphi_{m+1}(z_1,0,0)=0 \qquad \forall z_1\in D_{m+1}$.
\end{enumerate}
\end{lem}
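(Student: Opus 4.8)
The plan is to perform a single holomorphic change of coordinates that kills the lowest-order ``problematic'' Taylor coefficients of $\varphi_m$ in the $z_2$-direction along the manifold $M=\{z_2=0,\ z_3=0\}$. Write the Taylor expansion of $\varphi_m(z_1,z_2,\Im(z_3))$ around a point $(z_1,0,0)$. By the hypotheses \eqref{der0m}, \eqref{derIm} and \eqref{der0m+1}, the lowest-order terms that can be nonzero at order $m+1$ in the $z_2$-variables (after accounting for $\Im(z_3)$) are the pure holomorphic and antiholomorphic ones, i.e. the coefficients of $z_2^{m+1}$ and $\overline{z}_2^{\,m+1}$; all the genuinely mixed terms of total degree $m+1$ either vanish by assumption or, in the even case, vanish as a consequence of pseudoconvexity. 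Concretely, set $a(z_1):=\frac{1}{(m+1)!}\partial_{z_2}^{m+1}\varphi_m(z_1,0,0)$; since $\varphi_m$ is real-valued, the coefficient of $\overline{z}_2^{\,m+1}$ is $\overline{a(z_1)}$. So modulo terms of order $\geq m+2$ in $z_2$ and terms involving a positive power of $\Im(z_3)$, one has
\be
\varphi_m(z_1,z_2,\Im(z_3)) = 2\,\Re\!\bigl(a(z_1)\,z_2^{m+1}\bigr) + (\text{higher order / }\Im(z_3)\text{ terms}).
\ee
The idea is to absorb $2\Re(a(z_1)z_2^{m+1})$ into the $\Re(z_3)$ variable by the substitution
\be
w_1 = z_1,\qquad w_2 = z_2,\qquad w_3 = z_3 + 2\,a(z_1)\,z_2^{m+1}.
\ee
This is holomorphic (here it is essential that $a$ is a holomorphic function of $z_1$ — which should follow from pseudoconvexity, or more precisely from the fact that the $z_2^{m+1}$-coefficient of a plurisubharmonic-type constraint varies holomorphically; this mirrors the classical normalization of $2$-dimensional finite-type domains), it fixes $M$ and the origin, and under it $\Re(z_3) - \varphi_m$ becomes $\Re(w_3) - \varphi_{m+1}(w_1,w_2,\Im(w_3))$ with the offending $m+1$ order pure terms now cancelled, so that (ii) holds. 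One then checks that (iii), $\partial_{\Im(z_3)}\varphi_{m+1}(z_1,0,0)=0$, is preserved, because the added term $2a(z_1)z_2^{m+1}$ and its conjugate vanish to order $m+1\geq 2$ in $z_2$ and hence contribute nothing to the first $\Im(z_3)$-derivative along $M$; here one may also need to re-apply a Bedford--Fornaess-type argument (Lemma \ref{bedford-fornaess-lemma}) or a further elementary change of variables in $\Im(z_3)$ to restore (iii) exactly, shrinking the domain as needed to get the new polydisc-type set $U_{m+1}$.

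I would organize the proof as follows. First, expand $\varphi_m$ and use \eqref{der0m}--\eqref{der0m+1} to identify precisely which degree-$(m+1)$ monomials in $(z_2,\overline z_2)$ survive along $M$; record the survivors as a single holomorphic function $a(z_1)$ times $z_2^{m+1}$ plus its conjugate. Second, prove holomorphy and, if necessary, boundedness/smoothness of $a$ on a possibly smaller disc $D_{m+1}$ — this is where pseudoconvexity of $\Omega_m$ at its boundary enters, via a Hopf-lemma / subharmonicity argument applied fiberwise, in the spirit of Bedford--Fornaess and Catlin's invariant-theoretic normalization. Third, define the biholomorphism $\Phi_{m+1}$ by the explicit shear above, verify it maps the region $\{\Re(z_3)>\varphi_m\}$ to a region of the same form with new defining function $\varphi_{m+1}$, and check that $\varphi_{m+1}$ is smooth and small (again shrinking $t_m$, $D_m$, $B_m$ to define $t_{m+1}$, $D_{m+1}$, $B_{m+1}$ and the open set $U_m'$). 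Fourth, verify the normalization conditions (ii) and (iii) directly from the formula for $\varphi_{m+1}$. The even-$m$ remark — that \eqref{der0m+1} is automatic — would be established separately: for $m$ even, a genuinely mixed coefficient $\partial_{z_2}^{j}\partial_{\overline z_2}^{k}\varphi_m(z_1,0,0)$ with $j,k\geq 1$, $j+k=m+1$ odd, would force, via the positivity of the complex Hessian restricted to the $z_2$-direction along $M$ (where $\varphi_m$ vanishes to order $m$ by \eqref{der0m}), a violation of pseudoconvexity, because the leading term of the Levi form in that direction would then be an odd-degree real form, which cannot be nonnegative.

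The main obstacle I expect is the holomorphy of the coefficient function $a(z_1)$: a priori $\varphi_m$ is only smooth, so its Taylor coefficients in $z_2$ are merely smooth functions of $z_1$, and there is no reason for $a(z_1)$ to be holomorphic unless one extracts it from the pseudoconvexity constraint in the right way. The resolution should be the observation, going back to the $2$-dimensional finite-type literature, that for the domain to stay pseudoconvex near $M$ the ``first obstruction'' to higher-order flatness must be holomorphic — essentially because one can write the relevant piece of the complex Hessian as (holomorphic) $+$ (antiholomorphic) $+$ (nonnegative remainder), and the two extremal pieces must match up. Making this rigorous in the parametric (i.e. $z_1$-dependent) setting, with the necessary uniformity to keep all the domains honest polydisc-type regions, is the delicate point; the anisotropic scaling argument mentioned in the introduction is presumably invoked precisely here, to reduce to a model computation at each scale. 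A secondary, more bookkeeping-type difficulty is ensuring that conditions (ii) and (iii) at level $m$ are genuinely not destroyed by the coordinate change and that all the shrinking of neighborhoods can be done so that finitely many (in fact $\tau-1$) iterations land inside a common neighborhood of $P$, which is what Lemma \ref{structure-lemma} ultimately needs.
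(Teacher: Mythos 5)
Your overall skeleton matches the paper's: identify the surviving degree-$(m+1)$ terms along $M$ as $2\Re\bigl(a(z_1)z_2^{m+1}\bigr)$ with $a(z_1)=\frac{1}{(m+1)!}\partial_{z_2}^{m+1}\varphi_m(z_1,0,0)$, absorb them by the shear $(z_1,z_2,z_3)\mapsto(z_1,z_2,z_3-2a(z_1)z_2^{m+1})$, and verify (ii), (iii) by the chain rule. But the step you yourself flag as the main obstacle --- the holomorphy of $a$ --- is exactly the content of the lemma, and your proposal does not actually supply an argument for it; the ``holomorphic $+$ antiholomorphic $+$ nonnegative remainder must match up'' heuristic is not the mechanism that works here. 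The paper's route is: rescale by the anisotropic dilations $T_{\delta^{-1}}$ with weights $(1,\delta,\delta^{m+1})$ in $(z_1,z_2,z_3)$; since these are biholomorphic, the rescaled graph domains stay pseudoconvex, and by Proposition \ref{aniso-prop} (using \eqref{der0m} and \eqref{derIm}) the rescaled defining functions converge with all derivatives to the $z_1$-dependent homogeneous polynomial $\sum_{\gamma_1+\gamma_2=m+1}\frac{1}{\gamma_1!\gamma_2!}\partial_{z_2}^{\gamma_1}\partial_{\overline z_2}^{\gamma_2}\varphi_m(z_1,0,0)z_2^{\gamma_1}\overline z_2^{\gamma_2}$, which by Proposition \ref{pscvx-prop} is plurisubharmonic \emph{jointly in} $(z_1,z_2)$. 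After the mixed terms are removed this polynomial is $2\Re\bigl(a(z_1)z_2^{m+1}\bigr)$, and the decisive computation is that the determinant of its complex Hessian equals $-\bigl|\frac{1}{m!}\partial_{\overline z_1}\partial_{z_2}^{m+1}\varphi_m(z_1,0,0)\bigr|^2|z_2|^{2m}$; plurisubharmonicity forces this to be $\ge 0$, hence $\partial_{\overline z_1}a\equiv 0$. Without this (or an equivalent) argument your shear need not be holomorphic and the proof does not close.

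A second, related gap: your justification of the even-$m$ claim invokes ``positivity of the complex Hessian restricted to the $z_2$-direction along $M$,'' but on $M$ itself the Hessian of $\varphi_m$ in $z_2$ vanishes (by \eqref{der0m}), so no information about degree-$(m+1)$ coefficients can be read off there directly; one must exploit pseudoconvexity at nearby boundary points, where lower-order and $\Im(z_3)$-dependent terms interfere. This is precisely what the anisotropic scaling limit handles: on the limit polynomial, subharmonicity in $z_2$ produces a nonnegative homogeneous polynomial of odd degree $m-1$, which must vanish identically, killing the mixed coefficients. So your intuition about which terms survive is right, but both the even-$m$ reduction and the holomorphy of $a$ require the scaling-plus-limit argument (Propositions \ref{aniso-prop} and \ref{pscvx-prop}) to be made rigorous, and the latter additionally needs the Hessian-determinant identity rather than a matching argument.
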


\begin{proof}
Consider the biholomorphic transformations\be
T_{\delta}(z_1,z_2,z_3):=(z_1,\delta z_2,\delta^{m+1} z_3) \qquad (\delta>0).
\ee 
Observe that \be
T_{\delta^{-1}}(\Omega_m)=\{z\in T_{\delta^{-1}}(U_m)\colon \Re(z_3)>\delta^{-m-1}\varphi_m(z_1,\delta z_2,\delta^{m+1}\Im(z_3))\}.
\ee 
We put $\varphi_{m,\delta}(z_1, z_2,\Im(z_3)):=\delta^{-m-1}\varphi_m(z_1,\delta z_2,\delta^{m+1}\Im(z_3))$. Since $T_{\delta^{-1}}$ is biholomorphic, $T_{\delta^{-1}}(\Omega_m)$ is pseudoconvex at points where $\Re(z_3)=\varphi_{m,\delta}(z_1, z_2,\Im(z_3))$. 

We now apply Proposition \ref{aniso-prop} of Appendix \ref{aniso-app} with $f=\varphi_m$ and $\{D_\delta\}_{\delta>0}$, the dilations of $\C\times\R\equiv\R^3$ corresponding to the exponents $d_1=d_2=1$ and $d_3=m+1$, and $\ell=m+1$. Notice that $\beta\cdot d=\beta_1+\beta_2+(m+1)\beta_3<m+1$ means $\beta_1+\beta_2\leq m$ and $\beta_3=0$, so that the hypothesis of Proposition \ref{aniso-prop} are equivalent to hypothesis \eqref{der0m} above. We therefore have that $\varphi_{m,\delta}(z_1, z_2,\Im(z_3))$ converges pointwise, together with all its derivatives, to \bee
\varphi_{m,0}(z_1,z_2,\Im(z_3))&=&\sum_{\gamma\in\N^3\colon d\cdot\gamma=m+1}\frac{1}{\gamma!}\partial^{\gamma_1}_{z_2}\partial^{\gamma_2}_{\overline{z}_2}\partial^{\gamma_3}_{\Im(z_3)} \varphi_m(z_1,0,0)\cdot z_2^{\gamma_1}\overline{z}_2^{\gamma_2}\Im(z_3)^{\gamma_3}\\
&=&\sum_{\gamma_1+\gamma_2=m+1}\frac{1}{\gamma_1!\gamma_2!}\partial^{\gamma_1}_{z_2}\partial^{\gamma_2}_{\overline{z}_2}\varphi_m(z_1,0,0)\cdot z_2^{\gamma_1}\overline{z}_2^{\gamma_2}
\eee
In the second line we used the fact that $d_3=m+1$ and hypothesis \eqref{derIm}.

We can now invoke part \emph{(ii)} of Proposition \ref{pscvx-prop} of Appendix \ref{pscvx-app} to conclude that the domain \be
\left\{z\in D_m\times \C^2\colon \Re(z_3)>\sum_{\gamma_1+\gamma_2=m+1}\frac{1}{\gamma_1!\gamma_2!}\partial^{\gamma_1}_{z_2}\partial^{\gamma_2}_{\overline{z}_2}\varphi_m(z_1,0,0)\cdot z_2^{\gamma_1}\overline{z}_2^{\gamma_2}\right\}
\ee is pseudoconvex at boundary points in the interior of $D\times\C^2$. Equivalently, by part \emph{(i)} of the same proposition, \be
\sum_{\gamma_1+\gamma_2=m+1}\frac{1}{\gamma_1!\gamma_2!}\partial^{\gamma_1}_{z_2}\partial^{\gamma_2}_{\overline{z}_2}\varphi_m(z_1,0,0)\cdot z_2^{\gamma_1}\overline{z}_2^{\gamma_2} \text{ is plurisubharmonic}. 
\ee
Assume momentarily that $m$ is even. Then subharmonicity of the above function in the $z_2$ variable gives \be
\sum_{\gamma_1, \gamma_2\geq 1,\ \gamma_1+\gamma_2=m+1}\frac{1}{(\gamma_1-1)!(\gamma_2-2)!}\partial^{\gamma_1}_{z_2}\partial^{\gamma_2}_{\overline{z}_2}\varphi_m(z_1,0,0)\cdot z_2^{\gamma_1-1}\overline{z}_2^{\gamma_2-1}\geq 0.
\ee
Since a nonnegative homogeneous polynomial of odd degree is identically $0$, we conclude that $\partial^{\gamma_1}_{z_2}\partial^{\gamma_2}_{\overline{z}_2}\varphi_m(z_1,0,0)$ vanishes for every $z_1\in D_m$ and $\gamma_1,\gamma_2\geq1$ such that $\gamma_1+\gamma_2=m+1$. This proves that hypothesis \eqref{der0m+1} is superfluous when $m$ is even, as stated. Therefore, independently of the parity of $m$, we have that \bee
&&\sum_{\gamma_1+\gamma_2=m+1}\frac{1}{\gamma_1!\gamma_2!}\partial^{\gamma_1}_{z_2}\partial^{\gamma_2}_{\overline{z}_2}\varphi_m(z_1,0,0)\cdot z_2^{\gamma_1}\overline{z}_2^{\gamma_2}\\
&=&2\Re\left\{\frac{1}{(m+1)!}\partial^{m+1}_{z_2}\varphi_m(z_1,0,0)\cdot z_2^{m+1}\right\}
\eee is plurisubharmonic. An elementary computation then reveals that the determinant of the complex Hessian of the function above equals \be
-\left|\frac{1}{m!}\partial_{\overline{z}_1}\partial^{m+1}_{z_2}\varphi_m(z_1,0,0)\cdot z_2^m\right|^2.
\ee By plurisubharmonicity this quantity has to be nonnegative for every $z_1\in D_m$ and $z_2\in\C$, which is possible only if $\partial_{\overline{z}_1}\partial^{m+1}_{z_2}\varphi_m(z_1,0,0)$ vanishes on $D_m$, that is, if $\partial^{m+1}_{z_2}\varphi_m(\cdot,0,0)$ is holomorphic.

We are now ready to define $\Phi_{m+1}$ as follows:\be
\Phi_{m+1}(z_1,z_2,z_3):=\left(z_1,z_2,z_3-\frac{2}{(m+1)!}\partial^{m+1}_{z_2}\varphi_m(z_1,0,0)\cdot z_2^{m+1}\right).
\ee 
By what we just proved, it is clear that $\Phi_m$ is a biholomorphism of $U_m$ onto its image. Moreover, it is clear that there is a smaller neighborhood of $0$ $U'_m\subseteq U_m$ such that 
\be
\Phi_{m+1}(\Omega_m\cap U'_m)=\{z\in U_{m+1}\colon \Re(z_3)>\varphi_{m+1}(z_1,z_2,\Im(z_3))\},
\ee where $U_{m+1}$ is as in the statement and \bee
\varphi_{m+1}(z_1,z_2, \Im(z_3))&=&\varphi_m\left(z_1,z_2,\Im(z_3)+\frac{2}{(m+1)!}\Im\left(\partial^{m+1}_{z_2}\varphi_m(z_1,0,0)\cdot z_2^{m+1}\right)\right)\\
&-&\frac{2}{(m+1)!}\Re\left\{\partial^{m+1}_{z_2}\varphi_m(z_1,0,0)\cdot z_2^{m+1}\right\}.
\eee
An elementary application of the chain rule finally gives\bee
\partial_{\Im(z_3)}\varphi_{m+1}(z_1,0,0)&=&\partial_{\Im(z_3)}\varphi_m(z_1,0,0)=0,\\
\partial^j_{z_2}\partial^k_{\overline{z}_2}\varphi_{m+1}(z_1,0,0)&=&\partial^j_{z_2}\partial^k_{\overline{z}_2}\varphi_m(z_1,0,0)=0\qquad \forall j,k\colon j+k\leq m,\\
\partial^j_{z_2}\partial^k_{\overline{z}_2}\varphi_{m+1}(z_1,0,0)&=&\partial^j_{z_2}\partial^k_{\overline{z}_2}\varphi_m(z_1,0,0)=0\qquad \forall j,k\geq1\colon j+k= m+1,\\
\partial^{m+1}_{z_2}\varphi_{m+1}(z_1,0,0)&=&\partial^{m+1}_{z_2}\varphi_m(z_1,0,0)-\partial^{m+1}_{z_2}\varphi_m(z_1,0,0)=0,\\
\partial^{m+1}_{\overline{z}_2}\varphi_{m+1}(z_1,0,0)&=&\partial^{m+1}_{\overline{z}_2}\varphi_m(z_1,0,0)-\partial^{m+1}_{\overline{z}_2}\varphi_m(z_1,0,0)=0.
\eee
This concludes the proof.
\end{proof}

We are now ready to present the proof of Lemma \ref{structure-lemma}.

\begin{proof}[Proof of Lemma \ref{structure-lemma}]

We consider the following iteration scheme. \begin{enumerate}
\item We initially set $m=1$ (recall how $\Omega_1, D_1, B_1, U_1, \varphi_1$ and $\Phi_1$ have been defined just before the statement of Lemma \ref{structure-lemma}).
\item We are given $\Omega_m, D_m, B_m, U_m, \varphi_m$ and $\Phi_m$. If there exist $z^*\in D_m$ and $j_0,k_0\geq1$ such that $j_0+k_0=m+1$ and \be\partial^{j_0}_{z_2}\partial^{k_0}_{\overline{z}_2}\varphi_m(z^*,0,0)\neq0,\ee
we stop the iteration. If the opposite holds, i.e., \be
\partial^j_{z_2}\partial^k_{\overline{z}_2}\varphi_m(z_1,0,0)=0\qquad\forall z_1\in D_m,\quad \forall j,k\colon j,k\geq1,\quad j+k= m+1,
\ee go to step (3).
\item Lemma \ref{inductive-lemma} (whose hypothesis are easily seen to be satisfied) gives us \be
\Omega_{m+1}, D_{m+1}, B_{m+1}, U_{m+1}, \varphi_{m+1} \text{ and } \Phi_{m+1}.\ee Increase $m$ by $1$ and go back to step (2).
\end{enumerate}

We claim that this iteration scheme stops after finitely many steps. Let us first draw the consequences of the claim and then prove it.

If the iteration stops after $m$ steps, by Lemma \ref{inductive-lemma} we have in particular that $m$ has to be odd. The mapping $\Psi_m:=\Phi_1^{-1}\circ\cdots\circ\Phi_m^{-1}$ is well-defined on $U_m$ and it is a biholomorphism onto its image $V_m$ such that $\Psi_m(0)=P$ and\be
\Psi_m(\Omega_m)=\Omega\cap V_m,\quad \Omega_m:=\{z\in U_m\colon \Re(z_3)>\varphi_m(z_1,z_2,\Im(z_3))\},
\ee
where $\partial^j_{z_2}\partial^k_{\overline{z}_2}\varphi_m(z_1,0,0)=0$ for every $z_1\in D_m$ and every $j,k$ such that $j+k\leq m$, but $\partial^{j_0}_{z_2}\partial^{k_0}_{\overline{z}_2}\varphi_m(z^*,0,0)\neq0$ for at least a point $z^*\in D_m$ and a couple of exponents $j_0,k_0\geq1$ such that $j_0+k_0=m+1$.

Setting $t=t_m, D=D_m, B=B_m, U=U_m, V=V_m, \varphi:=\varphi_m, \Phi=\Psi_m^{-1}$ and $\tau=m+1$, we have the thesis.

We are left with proving that the iteration actually halts. We argue by contradiction, assuming that it never stops. In this case, we can define $\Psi_m$ and $V_m$ as above for every $m$ and consider the composition\be
H_m:=U_m\cap \C^2\times \{0\}\hookrightarrow U_m\stackrel{\Psi_m}{\longrightarrow} \C^3,
\ee that is a holomorphic mapping with Jacobian of rank 2 (because $\Psi_m$ is a biholomorphism) such that $H_m(0,0)=P$. Observe that $\rho_m\circ\Psi_m^{-1}$ is a local defining function for $\Omega$ on $V_m$, if \be
\rho_m(z):=\varphi_m(z_1,z_2,\Im(z_3))-\Re(z_3), 
\ee and that\be
\rho_m\circ\Psi_m^{-1}\circ H_m(z_1,z_2) = \varphi_m(z_1,z_2,0)=\OO(|z_2|^{m+1}).
\ee
Since $m$ is arbitrary, this contradicts the finite-type hypothesis and completes the proof.

\end{proof}

\section{A non-compactness lemma}\label{noncpt-sec}

Special instances of the following lemma play a key role both in \cite{fu-straube-conv} and in \cite{sahutoglu-straube}, although it does not appear as a separate statement in these papers. The strictly pseudoconvex version of the result can be found in \cite{straube-book} (Lemma $4.23$). We will need a more general statement, whose short proof is obtained adapting the arguments of the cited papers.

\begin{lem}\label{noncpt-lemma}
Let $\Omega_2\subseteq\Omega_1\subseteq\C^n$ be domains that share a boundary point $P$. Denote by $K_1$ (resp. $K_2$) the Bergman kernel of $\Omega_1$ (resp. $\Omega_2$) and assume that:
\begin{enumerate}
\item[(i)] there exists a neighborhood $V$ of $P$ in $\C^n$ such that $K_1(z,\cdot)$ is bounded on $\Omega_1\cap V$ for every $z\in \Omega_1\cap V$,
\item[(ii)] there exists a sequence $\{P_n\}_n\subseteq\Omega_2$ converging to $P$ such that \be
\lim_{n\rightarrow+\infty}K_1(P_n,P_n)=+\infty\quad\text{and}\quad K_2(P_n,P_n)\lesssim K_1(P_n,P_n).
\ee 
\end{enumerate}
Then the restriction operator $A(\Omega_1)\rightarrow A(\Omega_2)$ is not compact. 
\end{lem}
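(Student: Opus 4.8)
The plan is to argue by contradiction: suppose the restriction operator $R\colon A(\Omega_1)\to A(\Omega_2)$ is compact, and derive a contradiction with the Bergman-kernel blow-up in hypothesis (ii). The natural objects to feed into the compact operator are the $L^2$-normalized Bergman kernels of $\Omega_1$ centered at the points $P_n$, namely $g_n:=K_1(\cdot,P_n)/\sqrt{K_1(P_n,P_n)}\in A(\Omega_1)$, which satisfy $\|g_n\|_{L^2(\Omega_1)}=1$. First I would observe that $g_n\to 0$ weakly in $A(\Omega_1)$: for any fixed $h\in A(\Omega_1)$ the reproducing property gives $\langle h,g_n\rangle=\overline{h(P_n)}/\sqrt{K_1(P_n,P_n)}$, and since $P_n\to P\in\partial\Omega_1$ while $h$ is square-integrable, $h(P_n)$ cannot grow as fast as $\sqrt{K_1(P_n,P_n)}\to+\infty$ (this uses the standard pointwise estimate $|h(z)|^2\le K_1(z,z)\|h\|^2$ together with hypothesis (ii); more carefully, one can show $K_1(P_n,P_n)^{-1/2}h(P_n)\to 0$ by a normal-families or a direct Bergman-inequality argument near $P$). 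Granting compactness of $R$, weak-null sequences are sent to norm-null sequences, so $\|g_n\|_{L^2(\Omega_2)}=\|Rg_n\|\to 0$.

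The contradiction comes from bounding $\|g_n\|_{L^2(\Omega_2)}$ from below using the hypotheses. The key computation is
\[
\|g_n\|_{L^2(\Omega_2)}^2=\frac{1}{K_1(P_n,P_n)}\int_{\Omega_2}|K_1(z,P_n)|^2\,dV(z).
\]
Here is where hypothesis (i) enters: since $P_n\to P$, eventually $P_n\in\Omega_1\cap V$, and by (i) the function $z\mapsto K_1(z,P_n)$ is bounded on $\Omega_1\cap V$; hence its restriction to $\Omega_1\cap V$ lies in $A(\Omega_1\cap V)$ (it is holomorphic and now square-integrable on the smaller set). Moreover $\Omega_2\subseteq\Omega_1$ shares the boundary point $P$, so $\Omega_2$ agrees with $\Omega_1$ near $P$ only in the sense that $\Omega_2\cap V\subseteq\Omega_1\cap V$; the point is that $K_1(\cdot,P_n)\in A(\Omega_2\cap V)$, so by the extremal/reproducing property of $K_{\Omega_2\cap V}$ one gets $\int_{\Omega_2\cap V}|K_1(z,P_n)|^2\,dV(z)\ge |K_1(P_n,P_n)|^2/K_{\Omega_2\cap V}(P_n,P_n)$. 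Combining, and using $K_{\Omega_2\cap V}(P_n,P_n)\ge K_{\Omega_2}(P_n,P_n)$ (monotonicity of the Bergman kernel under shrinking the domain),
\[
\|g_n\|_{L^2(\Omega_2)}^2\ \ge\ \frac{1}{K_1(P_n,P_n)}\cdot\frac{K_1(P_n,P_n)^2}{K_{\Omega_2}(P_n,P_n)}\ =\ \frac{K_1(P_n,P_n)}{K_{\Omega_2}(P_n,P_n)}\ \gtrsim\ 1,
\]
the last inequality being precisely the second condition in hypothesis (ii). This lower bound of order $1$ contradicts $\|g_n\|_{L^2(\Omega_2)}\to 0$, completing the argument.

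The step I expect to be the main obstacle — and the one to write out carefully — is the localization: passing from $\int_{\Omega_2}$ to $\int_{\Omega_2\cap V}$ and justifying that $K_1(\cdot,P_n)$ is a legitimate competitor in the extremal problem for $\Omega_2\cap V$. Restricting the integral to $\Omega_2\cap V$ only decreases it, which is the right direction, so no harm there; but one must ensure $K_1(\cdot,P_n)\in L^2(\Omega_2\cap V)$, which is exactly what hypothesis (i) guarantees (boundedness on $\Omega_1\cap V\supseteq\Omega_2\cap V$, and $V$ has finite volume after possibly shrinking it). A secondary subtlety is the weak-null claim for $g_n$; if one prefers to avoid pointwise kernel asymptotics, note that $A(\Omega_1)$ is a Hilbert space, $\{g_n\}$ is bounded, hence has a weakly convergent subsequence, and any weak limit $g$ must satisfy $g(z)=\lim\langle K_1(\cdot,z),g_n\rangle=0$ for $z$ away from $P$ by the same estimate, so $g\equiv 0$; since this holds for every subsequence, $g_n\rightharpoonup 0$. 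With these two points pinned down, the rest is the short chain of inequalities above.
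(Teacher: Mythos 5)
Your overall strategy matches the paper's (normalized kernels $g_n=K_1(\cdot,P_n)/\sqrt{K_1(P_n,P_n)}$, compactness forces $\|g_n\|_{L^2(\Omega_2)}\to 0$, contradiction with a lower bound coming from (ii)), but two of your steps are broken as written. The more serious one is the lower bound, which you yourself flag as the delicate point: after restricting the integral to $\Omega_2\cap V$ and invoking the extremal property of $K_{\Omega_2\cap V}$, you pass from $K_1(P_n,P_n)^2/K_{\Omega_2\cap V}(P_n,P_n)$ to $K_1(P_n,P_n)^2/K_{\Omega_2}(P_n,P_n)$ using $K_{\Omega_2\cap V}(P_n,P_n)\geq K_{\Omega_2}(P_n,P_n)$. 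That monotonicity is true, but it gives $K_1^2/K_{\Omega_2\cap V}\leq K_1^2/K_{\Omega_2}$, i.e.\ the inequality goes in the wrong direction, and hypothesis (ii) controls $K_{\Omega_2}$, not $K_{\Omega_2\cap V}$; so the displayed chain does not prove $\|g_n\|_{L^2(\Omega_2)}\gtrsim 1$. The localization is in fact unnecessary, and hypothesis (i) is not what guarantees square-integrability: since $\Omega_2\subseteq\Omega_1$, one has $\int_{\Omega_2}|K_1(\cdot,P_n)|^2\leq\int_{\Omega_1}|K_1(\cdot,P_n)|^2=K_1(P_n,P_n)<\infty$ automatically, so $K_1(\cdot,P_n)|_{\Omega_2}\in A(\Omega_2)$ and you may apply the extremal/reproducing property on $\Omega_2$ itself:
\[
\int_{\Omega_2}|K_1(\cdot,P_n)|^2\ \geq\ \frac{K_1(P_n,P_n)^2}{K_2(P_n,P_n)}\ \gtrsim\ K_1(P_n,P_n),
\]
which is exactly the paper's reproducing-formula-plus-Cauchy--Schwarz step and yields $\|g_n\|^2_{L^2(\Omega_2)}\gtrsim1$.

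The second problem is the weak-null claim $g_n\rightharpoonup 0$ in $A(\Omega_1)$. The pointwise estimate $|h(P_n)|^2\leq K_1(P_n,P_n)\|h\|^2$ only gives $|\langle h,g_n\rangle|\leq\|h\|$, i.e.\ boundedness, not decay, and a normal-families argument does not supply the decay either; for a general domain it is not clear that $h(P_n)/\sqrt{K_1(P_n,P_n)}\to0$ for every $h\in A(\Omega_1)$ without further input. The missing input is precisely hypothesis (i), which you never use for this step: for $z\in\Omega_1\cap V$ the numerator $K_1(z,P_n)$ stays bounded as $n\to\infty$ while $K_1(P_n,P_n)\to\infty$, so $g_n(z)\to0$ pointwise on the open set $\Omega_1\cap V$; then your subsequence argument works (any weak limit vanishes on $\Omega_1\cap V$, hence identically by the identity theorem), or, as the paper does, one can avoid weak convergence altogether: compactness of the restriction gives a subsequence of $g_n$ converging in $L^2(\Omega_2)$ and a.e., and the pointwise vanishing on $V\cap\Omega_2$ forces the limit to be $0$. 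With these two repairs your argument coincides with the paper's proof.
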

The symbol $\lesssim$ means that the implicit constant is independent of $n$.

Before going to the proof, a few comments are in order.

\begin{enumerate}

\item[(a)] An effective way to verify assumption \emph{(i)} is via the observation (see \cite{kerzman}) that, since $K_1$ is holomorphic in its first variable,\be
K_1(z,w)=\int_{\Omega_1}K_1(z',w)\chi_z(z')d\lambda(z')=\overline{B_1(\chi_z)(w)}\qquad\forall z,w\in\Omega_1.
\ee Here $\chi_z$ is an $L^1$-normalized real-valued radial test function centered at $z$ and with support inside $\Omega_1$, while $B_1$ is the Bergman projection of $\Omega_1$. Therefore, all we need for \emph{(i)} to hold is that $B_1$ maps test functions on $\Omega_1\cap V$ to functions that are bounded on the same region, a rather weak regularity requirement, which is certainly satisfied if $\Omega_1$ is pseudoconvex, smooth in a neighborhood of $P$, and has a compact $\dbar$-Neumann operator. In fact under these assumptions a well-known theorem of Kohn and Nirenberg states that $N_1$, the $\dbar$-Neumann operator, maps smooth $(0,1)$-forms to forms that are smooth up to the boundary in a neighborhood of $P$ (see Theorem 4.6 of \cite{straube-book}, where the assumption is that the domain is everywhere smooth, but the argument is local). Combining this with Kohn's identity $B_1(\chi_z)=\chi_z-\dbar^*N_1\dbar \chi_z$, the conclusion follows immediately.

\item[(b)] The condition $\lim_{n\rightarrow+\infty}K_1(P_n,P_n)=+\infty$ holds for any sequence $\{P_n\}_n$ approaching a boundary point of $\Omega_1$ that satisfies the \emph{outer cone condition} (see \cite{jarnicki-pflug}), which is the case if $\Omega_1$ is smooth in a neighborhood of $P$.

\item[(c)] The inequality $K_2(z,z)\geq K_1(z,z)$ follows from the inclusion $\Omega_2\subseteq\Omega_1$ and the variational characterization \bel\label{var-bergman}
K_j(z,z):=\sup\left\{|f(z)|^2\colon f\in A(\Omega_j), \quad ||f||_{L^2}=1\right\}\qquad (j=1,2).
\eel

\end{enumerate}

Thus the key assumption of Lemma \ref{noncpt-lemma} is the bound $K_2(P_n,P_n)\lesssim K_1(P_n,P_n)$. Observe that in view of \eqref{var-bergman}, the larger $\Omega_2$ is, the easier it is for it to satisfy this condition.

\begin{proof}
We argue by contradiction, assuming that the restriction operator is compact. Consider the sequence of functions \be f_n(z):=\frac{K_1(z,P_n)}{\sqrt{K_1(P_n,P_n)}}\qquad (z\in\Omega_1).\ee
By elementary properties of the Bergman kernels, they are holomorphic and satisfy $\int_{\Omega_1}|f_n|^2=1$. Using our assumption and passing to a subsequence, we can assume that $f_n$ converges in $L^2(\Omega_2)$ and pointwise almost everywhere to a function $g\in A(\Omega_2)$. 

Moreover, by \emph{(i)} and the first half of \emph{(ii)}, we immediately have that 
\be\lim_{n\rightarrow+\infty}f_n(z)=0 \qquad \forall z\in V\cap\Omega_1.\ee Hence $g$ vanishes almost everywhere on $V\cap\Omega_2$. Being holomorphic, it vanishes everywhere on $\Omega_2$.

Now consider $K_1(\cdot,P_n)$ as an element of $A(\Omega_2)$. The reproducing formula for $\Omega_2$ gives\bee
K_1(P_n,P_n)&=&\int_{\Omega_2}K_2(P_n,w)K_1(w,P_n)\\
&\leq& \sqrt{\int_{\Omega_2}|K_2(P_n,\cdot)|^2}\cdot\sqrt{\int_{\Omega_2}|K_1(\cdot,P_n)|^2}\\
&=&\sqrt{K_2(P_n,P_n)}\cdot\sqrt{\int_{\Omega_2}|K_1(\cdot,P_n)|^2}\\
&\lesssim& \sqrt{K_1(P_n,P_n)}\cdot\sqrt{\int_{\Omega_2}|K_1(\cdot,P_n)|^2},
\eee by Cauchy-Schwartz and the second half of \emph{(2)}. Rearranging this inequality, we find $\int_{\Omega_2}|f_n|^2\gtrsim 1$, which clearly contradicts the fact that $f_n$ converges to $0$ in $L^2(\Omega_2)$.  
\end{proof}

\section{Bergman kernels on the diagonal on domains of finite type in $\C^2$}\label{bergman-sec}

We recall a result of Catlin about Bergman kernels of smooth pseudoconvex domains of finite-type in $\C^2$. 

\begin{thm}\label{catlin-thm} Let $\Omega\subseteq\C^2$ be a smooth pseudoconvex domain.  Assume that $P\in b\Omega$ is a point of type $\tau<+\infty$ and denote by $\nu$ the exterior normal to $\Omega$ at $P$. Let $V$ be a bounded neighborhood of $P$ in $\C^2$ and $K$ the Bergman kernel of $\Omega\cap V$. Then there exists $\delta_0>0$ such that\be
K(P-\delta\nu,P-\delta\nu)\approx \delta^{-2-\frac{2}{\tau}}\qquad\forall \delta\in(0,\delta_0).
\ee
\end{thm}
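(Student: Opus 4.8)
\textbf{Proof plan for Theorem \ref{catlin-thm}.}

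The plan is to reduce the statement to the known sharp two-sided estimates for the Bergman kernel on the diagonal of a smooth bounded pseudoconvex domain of finite type in $\C^2$. The key point is that Catlin's theorem \cite{catlin-invariant} (combined with the earlier work establishing subelliptic estimates and the size of the Bergman kernel, see also \cite{mcneal-stein-bergman} for the $\C^2$ case) provides an estimate of the form $K_{\widetilde\Omega}(z,z)\approx \delta(z)^{-2}\,\mathrm{Vol}(Q(z,\delta(z)))^{-1}$, where $\delta(z)$ is the distance to the boundary and $Q(z,r)$ is a suitable bidisc-shaped polydisc adapted to the geometry of $b\widetilde\Omega$; along the inner normal from a point of type $\tau$ one has $\mathrm{Vol}(Q(P-\delta\nu,\delta))\approx \delta\cdot\delta^{2/\tau}$ (one ``long'' direction of size $\delta^{1/\tau}$ in the complex-tangential variable and one direction of size $\delta$ in the normal variable, each counted with multiplicity two real dimensions), so that $K_{\widetilde\Omega}(P-\delta\nu,P-\delta\nu)\approx \delta^{-2}\cdot\delta^{-1}\cdot\delta^{-2/\tau}\cdot\delta^{-1}$... . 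Let me instead phrase it cleanly: the sharp estimate reads $K_{\widetilde\Omega}(P-\delta\nu,P-\delta\nu)\approx \delta^{-2-\frac{2}{\tau}}$ directly, since in $\C^2$ the polydisc $Q(P-\delta\nu,\delta)$ has volume comparable to $\delta^2\cdot\delta^{2/\tau}$ and $K\approx \delta(z)^{-2}\mathrm{Vol}(Q)^{-1}$ would give $\delta^{-2}\cdot\delta^{-2}\cdot\delta^{-2/\tau}$, which is the wrong power; the correct normalization is simply $K\approx\mathrm{Vol}(Q(z,\delta(z)))^{-1}\approx(\delta^2\cdot\delta^{2/\tau})^{-1}=\delta^{-2-2/\tau}$. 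This is exactly the content of Catlin's estimates.

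The steps I would carry out are as follows. First, since the statement is purely local near $P$ and $K$ is the Bergman kernel of $\widetilde\Omega:=\Omega\cap V$ with $V$ bounded, I would note that $\widetilde\Omega$ is itself a bounded pseudoconvex domain that is smooth and of finite type $\tau$ in a neighborhood of $P$ (its boundary near $P$ coincides with $b\Omega$), so the localization results for the Bergman kernel (see, e.g., the localization principle in \cite{straube-book} or the original sources) allow one to replace $\widetilde\Omega$ by any convenient smooth bounded pseudoconvex domain of finite type agreeing with $\Omega$ near $P$, without affecting the asymptotics of $K(z,z)$ as $z\to P$ nontangentially. Second, I would invoke Catlin's main theorem on the boundary behavior of the Bergman kernel for finite-type domains in $\C^2$ \cite{catlin-invariant}, which gives the two-sided bound $K(z,z)\approx \mathrm{Vol}\big(Q(z)\big)^{-1}$ where $Q(z)$ is the ``$\delta(z)$-polydisc'' centered at $z$ adapted to the boundary geometry. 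Third, I would compute $\mathrm{Vol}(Q(P-\delta\nu))$ along the normal direction: choosing coordinates so that near $P$ the domain is $\{\Re z_2 > \psi(z_1,\Im z_2)\}$ with $\psi$ vanishing to order $\tau$ at $0$ in $z_1$, the polydisc $Q(P-\delta\nu)$ has radius $\approx\delta$ in the $z_2$ direction and radius $\approx \delta^{1/\tau}$ in the $z_1$ direction (this is where finite type $\tau$ enters quantitatively, via the estimate that $\psi$ controls a nondegenerate term of degree $\le\tau$), hence $\mathrm{Vol}(Q(P-\delta\nu))\approx \delta^2\cdot(\delta^{1/\tau})^2 = \delta^{2+2/\tau}$. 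Fourth, combining the last two steps yields $K(P-\delta\nu,P-\delta\nu)\approx\delta^{-2-2/\tau}$ for $\delta\in(0,\delta_0)$, as claimed.

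The main obstacle, and the step requiring the most care, is the precise invocation of Catlin's estimate together with the localization: one must make sure that (a) the quantity $\tau$ appearing in Theorem \ref{catlin-thm} (the D'Angelo type at $P$) is the same as the exponent controlling the size of the adapted polydiscs along the normal (for $\C^2$ smooth pseudoconvex domains the D'Angelo $1$-type, the regular type, and the multiplicity of the degeneracy of the Levi form all coincide, which is the classical $\C^2$ fact), and (b) the localization to $\widetilde\Omega=\Omega\cap V$ does not distort the asymptotics — this is standard but should be cited carefully, since $\widetilde\Omega$ need not be pseudoconvex globally unless one first shrinks and smooths, or appeals directly to the fact that for the on-diagonal kernel only the local geometry near the relevant boundary point matters (the difference between $K_{\widetilde\Omega}$ and $K_{\Omega'}$ for two domains agreeing near $P$ is bounded near $P$, hence negligible compared to the blow-up $\delta^{-2-2/\tau}$). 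Everything else is bookkeeping with the known machinery; I do not foresee any genuinely new difficulty, since this theorem is essentially a restatement of results of Catlin specialized to the normal direction.
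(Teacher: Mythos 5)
Your proposal is correct and takes essentially the same route as the paper: the paper obtains this theorem by quoting Theorem 2 of \cite{catlin-invariant} (Catlin's sharp diagonal estimate for a bounded smooth pseudoconvex finite-type domain in $\C^2$) and then applying Ohsawa's localization lemma \cite{ohsawa-localization} to transfer the asymptotics to the Bergman kernel of $\Omega\cap V$, which are exactly the two ingredients you identify. The careful localization you call for is precisely Ohsawa's lemma (note that your parenthetical alternative, that the kernels of two domains agreeing near $P$ differ by a bounded term, is not the right justification in general — the localization gives comparability of the kernels, not boundedness of their difference — but since your primary route is the standard localization result, this does not affect the argument), and your initial wobble over the normalization of Catlin's estimate is resolved correctly in your final formulation $K\approx\mathrm{Vol}(Q)^{-1}\approx\delta^{-2-2/\tau}$.
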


This is Theorem $2$ of \cite{catlin-invariant}, except that it deals with the Bergman kernel of a bounded smooth pseudoconvex domain. To obtain the result stated above, one can apply the localization lemma of Ohsawa \cite{ohsawa-localization}.

Next, we want to state an elementary upper bound for the Bergman kernel of a non-smooth model domain ``of type $\tau$'', that will be needed later. The proof is standard.

\begin{prop}\label{bergman-model-prop}
Consider the domain\be
\Omega_{\tau,C}:=\left\{(z_1,z_2)\in\C^2\colon \Re(z_2)>C|z_1|^\tau+C|\Im(z_2)|\right\},
\ee where $\tau, C>0$. Let $W$ be a neighborhood of the origin in $\C^2$ and $K$ be the Bergman kernel of $\Omega_{\tau,C}\cap W$ . Then there exists $\delta_0>0$ such that \be K((0,\delta),(0,\delta))\lesssim \delta^{-2-\frac{2}{\tau}}\qquad \forall\delta\in (0,\delta_0).\ee
\end{prop}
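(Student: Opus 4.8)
The plan is to bound $K((0,\delta),(0,\delta))$ from above by exhibiting, for each small $\delta>0$, an embedded polydisc (or more precisely an anisotropic product domain) $P_\delta\subseteq\Omega_{\tau,C}\cap W$ centered at $(0,\delta)$, and then using the monotonicity of the Bergman kernel under inclusion. Indeed, if $P_\delta\subseteq\Omega_{\tau,C}\cap W$ with $(0,\delta)\in P_\delta$, then by the variational characterization \eqref{var-bergman} (applied to both domains, the larger one being $\Omega_{\tau,C}\cap W$) we get $K((0,\delta),(0,\delta))\leq K_{P_\delta}((0,\delta),(0,\delta))$, and for product domains the Bergman kernel factors and can be computed explicitly.

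Concretely, I would look for $P_\delta$ of the form $\{(z_1,z_2)\colon |z_1|<r_1(\delta),\ |z_2-\delta|<r_2(\delta)\}$. For this polydisc to sit inside $\Omega_{\tau,C}$ near $(0,\delta)$ we need $\Re(z_2)>C|z_1|^\tau+C|\Im(z_2)|$ for all points in it; writing $z_2=\delta+w$ with $|w|<r_2$, this holds provided $\delta-r_2>C r_1^\tau+C r_2$, i.e. it suffices to take $r_2=\delta/4$ and $r_1=c\,\delta^{1/\tau}$ for a small constant $c=c(C,\tau)$. For $\delta$ small enough this polydisc also lies inside the fixed neighborhood $W$ (since its radii shrink with $\delta$), which pins down $\delta_0$. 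The Bergman kernel of the polydisc $D(0,r_1)\times D(\delta,r_2)$ at its center equals $\frac{1}{\pi r_1^2}\cdot\frac{1}{\pi r_2^2}\approx r_1^{-2}r_2^{-2}\approx \delta^{-2/\tau}\cdot\delta^{-2}=\delta^{-2-2/\tau}$, which is exactly the claimed bound.

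There is essentially no hard step here — the monotonicity inequality is immediate from \eqref{var-bergman}, the polydisc computation is the textbook formula, and the geometric containment $P_\delta\subseteq\Omega_{\tau,C}\cap W$ is an elementary inequality check. The only mild point of care is making the choice of radii so that both the defining inequality of $\Omega_{\tau,C}$ (including the $C|\Im(z_2)|$ term) and the containment in $W$ hold simultaneously for all $\delta\in(0,\delta_0)$; one simply chooses the $z_1$-radius on the scale $\delta^{1/\tau}$ and the $z_2$-radius on the scale $\delta$, with small enough constants. (One could alternatively invoke a general lower bound for the Bergman kernel in terms of the largest polydisc contained in the domain, but since only the upper bound for $K$ of the truncated model is needed, the direct inclusion argument is cleaner.) This is why the authors describe the proof as ``standard''.
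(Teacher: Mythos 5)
Your argument is correct and essentially coincides with the paper's proof: the paper likewise inserts the anisotropic bidisc $D(0,\delta^{1/\tau})\times D((2C+1)\delta,\delta)\subseteq\Omega_{\tau,C}\cap W$ and bounds $|f(\mathrm{center})|^2$ by $\mathrm{Vol}(E_\delta)^{-1}$ via the mean-value property and Jensen, which is exactly your monotonicity-plus-polydisc-kernel computation in different packaging. One small correction: your specific choice $r_2=\delta/4$ fails once $C\geq 3$, since the condition $\delta-r_2>Cr_1^\tau+Cr_2$ forces $r_2\lesssim\delta/(1+C)$, so both radii (not only $r_1$) must carry constants depending on $C$ --- as your closing remark in fact allows.
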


\begin{proof}
Let us check that the bidisc $E_\delta:=D(0,\delta^{\frac{1}{\tau}})\times D((2C+1)\delta,\delta)$ is contained in $\Omega_{m,C}$ for every $\delta\in(0,1)$. If $(z_1,z_2)\in E_\delta$, $|\Im(z_2)|<\delta$ and \be
C|z_1|^\tau+C|\Im(z_2)|<2C\delta.
\ee Since $\Re(z_2)>2C\delta$, if $\delta<\delta_0$ (with $\delta_0$ dependent on $W$) we have $(z_1,z_2)\in \Omega_{\tau,C}$. 

Let now $f$ be a holomorphic function on $\Omega_{\tau,C}\cap W$ having $L^2$ norm equal to $1$. Since holomorphic functions are pluriharmonic, we can estimate (for $\delta<\delta_0$ depending on $W$)\be
|f(0,(2C+1)\delta)|^2=\left|\frac{1}{\text{Vol}(E_\delta)}\int_{E_\delta}f\right|^2\leq \frac{1}{\text{Vol}(E_\delta)}\int_{E_\delta}|f|^2\leq \frac{1}{\text{Vol}(E_\delta)},
\ee where we used the mean value property in both coordinates and Jensen's inequality. Taking the supremum over $f\in A(\Omega_{\tau,C}\cap W)$ of norm $1$ and computing the volume we conclude that \be
K((0,(2C+1)\delta),(0,(2C+1)\delta))\leq \frac{1}{4\pi^2\delta^{2+\frac{2}{\tau}}}\qquad\forall \delta<\delta_0.
\ee Rescaling the variable $\delta$, we obtain the stated estimate.
\end{proof}

\section{Proof of Theorem \ref{main-thm}}\label{proof-sec}

We argue by contradiction, assuming that $N_1$ is compact.

We begin by applying Lemma \ref{structure-lemma} and keeping the notation of its statement. We may assume that the neighborhood $V$ of the point $P$ is strictly pseudo-convex (for example, a ball), if we replace $U$ with a smaller open set. 

An anisotropic Taylor expansion (Proposition \ref{taylor-prop} of Appendix \ref{aniso-app}), together with the conclusions \emph{(ii)} and \emph{(iii)} of Lemma \ref{structure-lemma}, gives 
\bee
\varphi(z_1,z_2,\Im(z_3))&=&\sum_{\gamma_1+\gamma_2=\tau}\frac{\partial^{\gamma_1}_{z_2}\partial^{\gamma_2}_{\overline{z}_2}\varphi(z_1,0,0)}{\gamma_1!\gamma_2!}\cdot z_2^{\gamma_1}\overline{z}_2^{\gamma_2}  + \OO(|z_2|^{\tau+1} + |\Im(z_3)|)\\
&\leq&C|z_2|^\tau + C|\Im(z_3)|,
\eee
where $C$ and the implicit constant of the big O are uniform in $z_1\in D$. Thus\bel\label{inclusion}
D'\times \Omega_2\subseteq \widetilde{\Omega}, 
\eel where $D'\subseteq D$ is a small disc centered at $0$, \be\Omega_2=\{(z_2,z_3)\colon \Re(z_3)>C|z_2|^\tau + C|\Im(z_3)|, \quad |z_2|^2+|z_3|^2<\eps^2\},\ee for some $\eps>0$, and 
\be\widetilde\Omega:=\left\{z\in U\colon \Re(z_3)>\varphi(z_1,z_2,\Im(z_3))\right\}.\ee

We claim that the two-dimensional domains \be
\Omega_1:=\left\{(z_2,z_3)\in\C^2\colon (z^*,z_2,z_3)\in \widetilde\Omega\right\}\qquad\text{and}\qquad \Omega_2
\ee 
satisfy the hypothesis of Lemma \ref{noncpt-lemma}. Here $z^*$ is as in the statement of Lemma \ref{structure-lemma}, and $P=(z^*,0,0)$ is the common boundary point. Conditions $(ii)$, $(iii)$, and $(iv)$ of Lemma \ref{structure-lemma} imply that $P$ is point of type $\tau$ of $\Omega_1$, and that the exterior normal at $P$ is $\nu=(0,-1)$. Thus Theorem \ref{catlin-thm} and Proposition \ref{bergman-model-prop} combine to give the key hypothesis (the second half of $(ii)$ of Lemma \ref{noncpt-lemma}). For the other assumptions recall that, by our observations after the statement of the lemma, all we need is the compactness of the $\dbar$-Neumann operator of $\Omega_1$. To see this, notice that this domain is the intersection of a $2$-dimensional smooth bounded pseudoconvex domain of finite-type with $\{(z_2,z_3)\in\C^2\colon (z^*,z_2,z_3)\in U\}$. The latter set is a slice of $\Phi(V)$, which is strictly pseudoconvex thanks to our choice of $V$. Since compactness localizes to such intersections (see part (2) of Proposition $4.4$ in \cite{straube-book}), our goal follows from a theorem of Catlin (see \cite{catlin-global}).

Now the proof follows closely the arguments of \cite{fu-straube-conv} and \cite{sahutoglu-straube}. Lemma \ref{noncpt-lemma} yields a bounded sequence $\{f_n\}_n\subseteq A(\Omega_1)$ such that no subsequence converges in $L^2(\Omega_2)$. The Ohsawa-Takegoshi extension theorem (see \cite{ohsawa-takegoshi}, or Theorem $2.17$ of \cite{straube-book}) gives $\{F_n\}_n\subseteq A(\widetilde\Omega)$ such that \be
F_{n|\Omega_1}=f_n\quad\forall n\quad\text{and}\quad ||F_n||_{L^2(\widetilde\Omega)}\lesssim 1.
\ee

The $(0,1)$-forms $F_nd\overline{z}_1$ are closed  and uniformly bounded in $L^2(\widetilde\Omega)$, and so are their pull-backs $\Phi^*(F_nd\overline{z}_1)$ (the biholomorphism $\Phi:V\rightarrow U'$ may be assumed to be smooth up to the boundary to guarantee this), which are defined on $\Omega\cap V$. The latter is a domain (again taking $V$ small enough) whose $\dbar$-Neumann operator on $(0,1)$-forms is compact. In fact we assumed, by contradiction, that this is the case for the whole domain $\Omega$, and we can localize thanks to Proposition $4.4$ of \cite{straube-book} as before, because $V$ is strictly pseudoconvex. It is an elementary fact that the compactness of the $\dbar$-Neumann operator implies the compactness of the canonical solution operator (see, e.g., Proposition $4.2$ of \cite{straube-book}), so that composing with $\Phi^{-1}$ and passing to a subsequence we get a Cauchy sequence $\{g_n\}_n\subseteq L^2(\widetilde\Omega)$ with the property that $\dbar g_n=F_nd\overline{z}_1$. In particular $\partial_{\overline{z_1}}g_n=F_n$. 

If $(z_2,z_3)\in \Omega_2$ and $\eta$ is a radial real-valued test function on $D'$ such that $\int\eta=1$, we have the bound \bee
&&\int_{\Omega_2}|f_n(z^*,z_2,z_3)-f_m(z^*,z_2,z_3)|^2\\
&=&\int_{\Omega_2}\left|\int_{D'} \left(F_n(z_1,z_2,z_3)-F_m(z_1,z_2,z_3)\right)\eta(z_1)d\lambda(z_1)\right|^2\\
&=&\int_{\Omega_2}\left|\int_{D'} \left(g_n(z_1,z_2,z_3)-g_m(z_1,z_2,z_3)\right)\partial_{\overline{z_1}}\eta(z_1)d\lambda(z_1)\right|^2\\
&\lesssim& \int_{\Omega_2}\int_{D'} |g_n(z_1,z_2,z_3)-g_m(z_1,z_2,z_3)|^2d\lambda\\
&\leq& \int_{\widetilde\Omega}|g_n(z_1,z_2,z_3)-g_m(z_1,z_2,z_3)|^2d\lambda,
\eee
where in the second line we used the inclusion \eqref{inclusion} and the mean-value property for holomorphic functions, and in the third line an integration by parts. This proves that $\{f_n\}_n$ is Cauchy in $L^2(\Omega_2)$, the contradiction we seeked. Theorem \ref{main-thm} is proved.

\appendix 

\section{Anisotropic scaling}\label{aniso-app}

Given $d\in(\N\setminus\{0\})^q$, we consider the associated one-parameter group of anisotropic dilations of $\R^q$\be
D_\delta y:=(\delta^{d_1}y_1,\dots,\delta^{d_q}y_q) \qquad (\delta>0).
\ee
It is useful to introduce a corresponding anisotropic norm $N(y):=\max_{j=1,\dots,q}|y_j|^{\frac{1}{d_j}}$, that satisfies $N(D_\delta y)=\delta N(y)$ ($\forall\delta>0$). Our first elementary technical tool is an anisotropic Taylor series.

\begin{prop}\label{taylor-prop}
If $g$ is a smooth function defined in a neighborhood of $0$ in $\R^q$,
\be
g(y)=\sum_{\gamma\colon d\cdot \gamma\leq \ell}\frac{1}{\gamma!}\partial_y^\gamma g(0)y^\gamma+ \OO(N(y)^{\ell+1}),
\ee
where the constant implicit in the big O depends on an upper bound on finitely many derivatives of $g$ in a neighborhood of $0$.
\end{prop}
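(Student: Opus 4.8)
The plan is to deduce the anisotropic expansion from the ordinary (isotropic) Taylor expansion of $g$ at the origin, the transfer between the two resting on the elementary inequality
\be
|y_j|\le N(y)^{d_j}\qquad(j=1,\dots,q),
\ee
which is immediate from $N(y)=\max_j|y_j|^{1/d_j}$. Raising the $j$-th inequality to the power $\gamma_j$ and multiplying over $j$ gives the monomial bound $|y^\gamma|\le N(y)^{d\cdot\gamma}$. Consequently, on the unit polydisc $W:=\{y\colon N(y)<1\}$ (an open neighborhood of the origin) any monomial with $d\cdot\gamma>\ell$ is $\OO(N(y)^{\ell+1})$; here and below $\ell$ is a nonnegative integer, as it is in all our applications, so that $d\cdot\gamma>\ell$ forces $d\cdot\gamma\ge\ell+1$. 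Moreover, since every $d_j\ge 1$, on $W$ one has $|y|\le\sqrt{q}\,\max_j|y_j|\le\sqrt{q}\,\max_j N(y)^{d_j}=\sqrt{q}\,N(y)$.

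First I would write out Taylor's theorem for $g$ at the origin to order $\ell$,
\be
g(y)=\sum_{|\gamma|\le\ell}\frac{1}{\gamma!}\partial_y^\gamma g(0)\,y^\gamma+R_\ell(y),\qquad |R_\ell(y)|\le C|y|^{\ell+1},
\ee
where $C$ is controlled by the supremum over a fixed neighborhood of $0$ of the derivatives of $g$ of order $\ell+1$. I would then split the finite sum according to whether $d\cdot\gamma\le\ell$ or $d\cdot\gamma>\ell$. For a multi-index with $d\cdot\gamma\le\ell$ one has $|\gamma|=\sum_j\gamma_j\le\sum_j d_j\gamma_j=d\cdot\gamma\le\ell$, so the restriction $|\gamma|\le\ell$ adds nothing beyond $d\cdot\gamma\le\ell$, and this part of the sum is precisely $\sum_{d\cdot\gamma\le\ell}\frac{1}{\gamma!}\partial_y^\gamma g(0)\,y^\gamma$, the polynomial appearing in the statement.

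It then remains only to check that the leftover terms form an error of the claimed size on $W$. The monomials with $|\gamma|\le\ell$ but $d\cdot\gamma>\ell$ contribute, by the monomial bound above, at most $\big(\sum_{|\gamma|\le\ell}\frac{1}{\gamma!}|\partial_y^\gamma g(0)|\big)\,N(y)^{\ell+1}$, whose coefficient is a finite combination of derivatives of $g$ at $0$; and $|R_\ell(y)|\le C|y|^{\ell+1}\le Cq^{(\ell+1)/2}N(y)^{\ell+1}$ on $W$ by the comparison $|y|\le\sqrt q\,N(y)$. Adding these two bounds yields $g(y)=\sum_{d\cdot\gamma\le\ell}\frac{1}{\gamma!}\partial_y^\gamma g(0)\,y^\gamma+\OO(N(y)^{\ell+1})$ with an implicit constant depending only on an upper bound for the derivatives of $g$ of order at most $\ell+1$ near $0$, as asserted. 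I do not expect a genuine obstacle: the one point deserving a little care is the bookkeeping tying together the isotropic Taylor expansion (taken to exactly the order $\ell$) with the anisotropic remainder, namely that the monomials kept in the polynomial part are precisely those with $d\cdot\gamma\le\ell$, and that both the discarded monomials and the ordinary remainder $R_\ell$ are $\OO(N(y)^{\ell+1})$ on $W$.
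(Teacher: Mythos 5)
Your proof is correct and follows essentially the same route as the paper: expand $g$ by the ordinary Taylor formula to order $\ell$, split the sum according to $d\cdot\gamma\le\ell$ versus $d\cdot\gamma\ge\ell+1$, and absorb both the discarded monomials (via $|y^\gamma|\le N(y)^{d\cdot\gamma}$ and the integrality of $d\cdot\gamma$) and the isotropic remainder (via $|y|\lesssim N(y)$ near $0$) into $\OO(N(y)^{\ell+1})$. The only difference is that you make explicit two small points the paper leaves implicit, namely that $d_j\ge1$ forces $|\gamma|\le d\cdot\gamma$ and that $|y|\le\sqrt{q}\,N(y)$ on the anisotropic unit ball.
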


\begin{proof}
The ordinary Taylor expansion is\be
g(y)=\sum_{\gamma\colon |\gamma|\leq \ell}\frac{1}{\gamma!}\partial_y^\gamma g(0)y^\gamma+\OO(|y|^{\ell+1}),
\ee where $|\gamma|=\sum_{j=1}^q\gamma_j$ and the constant implicit in the big O depends on an upper bound on finitely many derivatives of $g$ in a neighborhood of $0$. Hence, 
\bee
g(y)&=&\sum_{\gamma\colon d\cdot \gamma\leq \ell}\frac{1}{\gamma!}\partial_y^\gamma g(0)y^\gamma+\sum_{\gamma\colon d\cdot \gamma\geq \ell+1, |\gamma|\leq \ell}\frac{1}{\gamma!}\partial_y^\gamma g(0)y^\gamma+ \OO(|y|^{\ell+1})\\
&=&\sum_{\gamma\colon d\cdot \gamma\leq \ell}\frac{1}{\gamma!}\partial_y^\gamma g(0)y^\gamma+\sum_{\gamma\colon d\cdot \gamma\geq \ell+1, |\gamma|\leq \ell}\OO(N(y)^{d\cdot\gamma})+ \OO(|y|^{\ell+1})\\
&=&\sum_{\gamma\colon d\cdot \gamma\leq \ell}\frac{1}{\gamma!}\partial_y^\gamma g(0)y^\gamma+ \OO(N(y)^{\ell+1}),
\eee
where in the first identity we used the fact that $d\cdot \gamma$ takes only integer values.
\end{proof}

We can now state and prove a useful proposition about anisotropic scaling of functions.

\begin{prop}\label{aniso-prop}
Let $B_1\subseteq\R^p$ and $B_2\subseteq\R^q$ be open euclidean balls centered at $0$, and let $f:B_1\times B_2\rightarrow\R$ be a smooth function. Let $d\in(\N\setminus\{0\})^q$ and $\{D_{\delta}\}_{\delta>0}$ the associated one-parameter group of dilations.

If $\partial_y^\beta f(x,0)=0$ for every $x\in B_1$ and for every $\beta\in\N^q$ such that $d\cdot\beta<\ell$, then\bel\label{aniso}
\lim_{\delta\rightarrow0+}\delta^{-\ell}f(x,D_\delta y) = \sum_{\gamma\colon d\cdot\gamma=\ell}\frac{1}{\gamma!}\partial^\gamma_yf(x,0)y^\gamma\qquad\forall (x,y)\in B_1\times\R^q,
\eel
where we adopted the usual multi-index notation. Notice that the rescaled function in the left-hand side is defined on $B_1\times D_{\delta^{-1}}B_2$, so that the limit is defined for every $(x,y)\in B_1\times\R^q$.

Moreover, the pointwise convergence holds also for derivatives, i.e., setting \be 
f_\delta(x,y):=\delta^{-\ell}f(x,D_\delta y)\quad\text{and}\quad f_0(x,y):= \sum_{\gamma\colon d\cdot\gamma=\ell}\frac{1}{\gamma!}\partial^\gamma_yf(x,0)y^\gamma,
\ee 
we have
\bel\label{aniso-der}
\lim_{\delta\rightarrow0+}\partial_x^\alpha\partial_y^\beta f_\delta(x,y)=\partial_x^\alpha\partial_y^\beta f_0(x,y)\qquad\forall (x,y)\in B_1\times\R^q.
\eel
\end{prop}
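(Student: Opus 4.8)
The plan is to prove Proposition~\ref{aniso-prop} by reducing everything to the ordinary Taylor expansion with remainder, applied with the group parameter $\delta$ frozen and then letting $\delta\to 0+$. First I would establish \eqref{aniso} itself. Fix $(x,y)\in B_1\times\R^q$; for $\delta$ small enough $D_\delta y\in B_2$, so I may write the ordinary Taylor expansion of $f$ in the $y$-variable about $(x,0)$ to order $\ell$:
\[
f(x,D_\delta y)=\sum_{\beta\colon |\beta|\leq \ell}\frac{1}{\beta!}\partial_y^\beta f(x,0)\,(D_\delta y)^\beta + \OO(|D_\delta y|^{\ell+1}).
\]
Now $(D_\delta y)^\beta=\delta^{d\cdot\beta}y^\beta$, so multiplying by $\delta^{-\ell}$ splits the sum into three groups: terms with $d\cdot\beta<\ell$, which vanish by the hypothesis $\partial_y^\beta f(x,0)=0$; terms with $d\cdot\beta=\ell$, which survive and produce exactly $f_0(x,y)$; and terms with $d\cdot\beta>\ell$ (but $|\beta|\le\ell$), which carry a factor $\delta^{d\cdot\beta-\ell}\to 0$. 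The remainder is $\OO(\delta^{-\ell}|D_\delta y|^{\ell+1})=\OO(\delta^{-\ell}\cdot\delta^{\ell+1}N(y)^{\ell+1})=\OO(\delta)$, using $|D_\delta y|\lesssim N(D_\delta y)^{\,?}$ — more carefully, each component $|\delta^{d_j}y_j|=\delta^{d_j}|y_j|$, and since $d_j\ge 1$ one has $|D_\delta y|\lesssim \delta$ for $\delta\le 1$ (with constant depending on $y$), so the whole remainder is $\OO(\delta^{1})$ as $\delta\to 0+$. This gives \eqref{aniso}.

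Next I would prove the statement for derivatives, \eqref{aniso-der}. The clean way is to observe that differentiating commutes with the scaling in a controlled manner: for a multi-index $\alpha$ in $x$ and $\beta$ in $y$,
\[
\partial_x^\alpha\partial_y^\beta f_\delta(x,y)=\delta^{-\ell}\,\delta^{\,d\cdot\beta}\,(\partial_x^\alpha\partial_y^\beta f)(x,D_\delta y)=\delta^{\,d\cdot\beta-\ell}\,(\partial_x^\alpha\partial_y^\beta f)(x,D_\delta y).
\]
Set $g:=\partial_x^\alpha\partial_y^\beta f$, which is again smooth on $B_1\times B_2$. I claim $g$ satisfies the vanishing hypothesis relative to the shifted threshold $\ell':=\ell-d\cdot\beta$: indeed for any multi-index $\mu\in\N^q$ with $d\cdot\mu<\ell'$ we have $\partial_y^\mu g(x,0)=\partial_x^\alpha\partial_y^{\beta+\mu}f(x,0)$, and since $d\cdot(\beta+\mu)=d\cdot\beta+d\cdot\mu<\ell$ and $\partial_y^{\beta+\mu}f(x,0)\equiv 0$ on $B_1$, differentiating in $x$ preserves this, so $\partial_y^\mu g(x,0)=0$. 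Thus the already-proved scalar statement \eqref{aniso} applied to $g$ with exponent $\ell'$ yields
\[
\lim_{\delta\to 0+}\partial_x^\alpha\partial_y^\beta f_\delta(x,y)=\lim_{\delta\to 0+}\delta^{-\ell'}g(x,D_\delta y)=\sum_{\mu\colon d\cdot\mu=\ell'}\frac{1}{\mu!}\partial_y^\mu g(x,0)\,y^\mu,
\]
and one checks directly that the right-hand side equals $\partial_x^\alpha\partial_y^\beta f_0(x,y)$ by differentiating the explicit polynomial $f_0(x,y)=\sum_{d\cdot\gamma=\ell}\frac{1}{\gamma!}\partial_y^\gamma f(x,0)y^\gamma$ term by term and reindexing $\gamma=\beta+\mu$. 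A small point to handle: if $d\cdot\beta>\ell$ then $\ell'<0$, the sum over $d\cdot\mu=\ell'$ is empty, $\partial_x^\alpha\partial_y^\beta f_0\equiv 0$, and the prefactor $\delta^{d\cdot\beta-\ell}\to 0$ kills the (bounded near $(x,0)$) factor $g(x,D_\delta y)$, so the limit is $0$, consistent.

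I expect the only mild obstacle is bookkeeping: verifying that the vanishing hypothesis is inherited by the derivatives $g=\partial_x^\alpha\partial_y^\beta f$ with the correctly shifted threshold $\ell'=\ell-d\cdot\beta$, and matching the resulting homogeneous sum against $\partial_x^\alpha\partial_y^\beta f_0$ via the reindexing $\gamma=\beta+\mu$ (with the factorial identity $\partial_y^\beta(y^{\beta+\mu})=\frac{(\beta+\mu)!}{\mu!}y^\mu$). There is also the elementary estimate $|D_\delta y|\lesssim\delta$ for $\delta\in(0,1]$ needed to bound the Taylor remainder, which is immediate since every $d_j\ge 1$. None of this is deep; the proposition is essentially the observation that anisotropic rescaling isolates the lowest-weight homogeneous part of the Taylor expansion, and the convergence of derivatives follows because differentiation itself is an anisotropically homogeneous operation.
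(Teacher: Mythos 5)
Your proof is correct and follows essentially the same route as the paper: both hinge on the identity $\partial_x^\alpha\partial_y^\beta f_\delta(x,y)=\delta^{d\cdot\beta-\ell}(\partial_x^\alpha\partial_y^\beta f)(x,D_\delta y)$, the observation that the vanishing hypothesis is inherited by $\partial_x^\alpha\partial_y^\beta f$ with threshold shifted to $\ell-d\cdot\beta$, and a Taylor expansion in $y$ in which the scaling isolates the weight-$\ell$ homogeneous part (the paper packages this as its anisotropic Taylor proposition, while you inline the ordinary Taylor expansion and bootstrap the derivative case from the scalar case). The bookkeeping you flag (the reindexing $\gamma=\beta+\mu$ and the empty-sum case $d\cdot\beta>\ell$) is handled correctly.
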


\begin{proof}
Observe that $\partial_x^\alpha\partial_y^\beta f_\delta(x,y)=\delta^{-\ell+d\cdot\beta}\partial_x^\alpha\partial_y^\beta f(x,D_\delta y)$ and recall that by assumption $\partial_y^\gamma\partial_x^\alpha\partial_y^\beta f(x,0)=0$ for every $x$ and every $\gamma$ such that $d\cdot\gamma<\ell-d\cdot\beta$. Proposition \ref{taylor-prop} yields
\bee
\partial_x^\alpha\partial_y^\beta f_\delta(x,y)&=&\delta^{-\ell+d\cdot\beta}\left(\sum_{\gamma:d\cdot\gamma = \ell-d\cdot\beta}\frac{\delta^{d\cdot\gamma}}{\gamma!}\partial_x^\alpha\partial_y^{\beta+\gamma} f(x,0)y^\gamma + \delta^{\ell-d\cdot\beta+1}\OO(N(y)^{\ell-d\cdot\beta+1})\right)\\
&=&\sum_{\gamma:d\cdot\gamma = \ell-d\cdot\beta}\frac{1}{\gamma!}\partial_x^\alpha\partial_y^{\beta+\gamma} f(x,0)y^\gamma + \delta\OO(N(y)^{\ell-d\cdot\beta+1}).
\eee Notice that the first summand may be empty (when $d\cdot\beta>\ell$). It is easy to see that the first term above equals $\partial_x^\alpha\partial_y^\beta f_0(x,y)$. Letting $\delta$ tend to $0$, we get the thesis.
\end{proof}

\section{Pseudoconvexity and defining functions}\label{pscvx-app}

Fix a neighborhood $V$ of the origin in $\C^{n-1}\times \R$ and a constant $t>0$. We define\be
U:=\{z=(z',z_n)\in\C^n\colon (z',\Im{z_{n-1}})\in V, \ |\Re(z_n)|<t \}.
\ee

If $\varphi:V\rightarrow \R$ is a $C^2$ function, we consider the domain\be
\Omega:=\{z\in U\colon \Re(z_n)>\varphi(z',\Im(z_{n}))\}.
\ee We want to express in terms of $\varphi$ the condition of pseudoconvexity of $\Omega$ at points where $\Re(z_n)=\varphi(z',\Im(z_{n}))$. It is clear that $b\Omega\cap U$ is parametrized by 
\be
V\ni(z',y)\longmapsto(z',\varphi(z',y)+iy).
\ee The complex tangent $T^{(1,0)}_zb\Omega$ at the point corresponding to $(z',y)$ is the set of $u\in\C^n$ such that\be
\sum_{j=1}^{n-1}\partial_{z_j}\varphi(z',y)u_j = \frac{1}{2}\left(1+i\partial_y\varphi(z',y)\right)u_n,
\ee as one can compute using the defining function $\rho(z)=\varphi(z',\Im(z_{n}))-\Re(z_n)$. 

Since $1+i\partial_y\varphi(z',y)$ does not vanish on $V$, we can parametrize $T^{(1,0)}_zb\Omega$ by \be
\C^{n-1}\ni u'=(u_1,\dots,u_{n-1})\longmapsto \left(u', \frac{\sum_{j=1}^{n-1}\partial_{z_j}\varphi(z',y)u_j}{\frac{1}{2}\left(1+i\partial_y\varphi(z',y)\right)}\right).
\ee 
Computing the second order derivatives of $\rho$ we can finally express the pseudoconvexity of $b\Omega$ on $U$ as the inequality\bee
&&\sum_{j,k=1}^{n-1}\partial^2_{z_j\overline{z}_k}\varphi(z',y) \cdot u_j\overline{u}_k + \partial^2_y\varphi(z',y)\left|\frac{\sum_{k=1}^{n-1}\partial_{z_k}\varphi(z',y)\cdot u_k}{1+i\partial_y\varphi(z',y)}\right|^2\\
&+&2\Re\left\{\sum_{j=1}^{n-1}i\partial^2_{z_j y}\varphi(z',y) \cdot u_j\cdot \overline{\frac{\sum_{k=1}^{n-1}\partial_{z_k}\varphi(z',y)\cdot u_k}{1+i\partial_y\varphi(z',y)}}\right\}\geq 0,
\eee that must hold for every $u\in\C^{n-1}$ and $(z',y)\in V$. 

From this inequality one can easily deduce the following proposition.

\begin{prop}\label{pscvx-prop}
\begin{enumerate}

\item[(i)] If $\varphi(z',y)\equiv\varphi(z')$ is independent of $y$, the pseudoconvexity of $\Omega$ is equivalent to the plurisubharmonicity of $\varphi(z')$.

\item[(ii)] If $\{\varphi_k\}_{k\in\N}$ is a sequence of $C^2$ real-valued functions as above such that the corresponding domains $\Omega_k$ are all pseudoconvex (at points where $\Re(z_n)=\varphi_k$), and if $\varphi_k$ converges pointwise together with all the derivatives up to order $2$ to a $C^2$ function $\varphi$, then the domain $\Omega$ associated to $\varphi$ is also pseudoconvex (at points where $\Re(z_n)=\varphi$).
\end{enumerate}

\end{prop}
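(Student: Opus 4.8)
The statement to prove is Proposition \ref{pscvx-prop}, which records two elementary consequences of the pseudoconvexity inequality derived just above it. The whole task is to extract parts \emph{(i)} and \emph{(ii)} from the displayed inequality
\[
\sum_{j,k=1}^{n-1}\partial^2_{z_j\overline{z}_k}\varphi(z',y)\, u_j\overline{u}_k + \partial^2_y\varphi(z',y)\left|\frac{\sum_{k}\partial_{z_k}\varphi(z',y) u_k}{1+i\partial_y\varphi(z',y)}\right|^2 + 2\Re\left\{\sum_{j}i\partial^2_{z_j y}\varphi(z',y)\, u_j\,\overline{\Bigl(\textstyle\frac{\sum_k\partial_{z_k}\varphi(z',y) u_k}{1+i\partial_y\varphi(z',y)}\Bigr)}\right\}\geq 0,
\]
valid for all $u\in\C^{n-1}$ and $(z',y)\in V$, which has already been shown to be equivalent to pseudoconvexity of $\Omega$ at boundary points with $\Re(z_n)=\varphi$. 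So there is no hard analytic content left; the proof is a matter of specializing and passing to a limit.

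\emph{Part (i).} When $\varphi=\varphi(z')$ is independent of $y$, one has $\partial_y\varphi\equiv0$ and $\partial^2_{z_j y}\varphi\equiv0$ and $\partial^2_y\varphi\equiv0$, so the second and third terms drop out and the inequality collapses to
\[
\sum_{j,k=1}^{n-1}\partial^2_{z_j\overline{z}_k}\varphi(z')\, u_j\overline{u}_k\geq 0\qquad\forall u\in\C^{n-1},\ \forall z',
\]
which is exactly the statement that the complex Hessian of $\varphi$ is positive semidefinite, i.e. that $\varphi$ is plurisubharmonic. Conversely, if $\varphi$ is plurisubharmonic this inequality holds and hence $\Omega$ is pseudoconvex. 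This direction is immediate and will take only a couple of lines.

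\emph{Part (ii).} Fix $(z',y)\in V$ and $u\in\C^{n-1}$. By hypothesis each $\varphi_k$ satisfies the pseudoconvexity inequality above with the same $(z',y,u)$. Every coefficient appearing in that inequality is a continuous function of the quantities $\varphi_k$, $\partial_{z_j}\varphi_k$, $\partial_y\varphi_k$, $\partial^2_{z_j\overline{z}_k}\varphi_k$, $\partial^2_{z_jy}\varphi_k$, $\partial^2_y\varphi_k$ evaluated at $(z',y)$ — in particular the denominators $1+i\partial_y\varphi_k(z',y)$ are bounded away from $0$, since $\partial_y\varphi_k(z',y)\to\partial_y\varphi(z',y)$ is real and the limiting denominator $1+i\partial_y\varphi(z',y)$ has modulus $\geq 1$, so no division-by-zero issue arises in the limit. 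Letting $k\to\infty$ and using the assumed convergence of $\varphi_k$ together with all derivatives up to order $2$, the inequality passes to the limit and yields the same inequality for $\varphi$; hence $\Omega$ is pseudoconvex at boundary points with $\Re(z_n)=\varphi$. The only point requiring a word of care is the nonvanishing of the denominator in the limit, noted above, and the trivial observation that a pointwise limit of nonnegative quantities is nonnegative.

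\emph{Main obstacle.} There is essentially none: the work has all been front-loaded into deriving the pseudoconvexity inequality in the body of the appendix. The mildest subtlety is the one flagged in Part \emph{(ii)} — ensuring the rational expressions make sense in the limit — and this is handled by the remark that $1+i\partial_y\varphi$ never vanishes because $\partial_y\varphi$ is real-valued. I would present the proof in two short paragraphs, one per item, each amounting to a sentence or two of specialization/limiting of the master inequality.
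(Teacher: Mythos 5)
Your proposal is correct and is exactly the argument the paper intends: the paper gives no separate proof, merely remarking that the proposition follows easily from the displayed pseudoconvexity inequality, and your two specializations (dropping the $y$-derivative terms for (i), passing the inequality to the pointwise limit for (ii), noting $|1+i\partial_y\varphi|\geq 1$) are the intended deductions.
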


\section*{Acknowledgements}

The author would like to thank the people of the complex analysis group of the University of Vienna for their comments and observations on his work, in particular Friedrich Haslinger, Bernhard Lamel, and Michael Reiter. The author's research have been supported by the FWF-project P28154.

\bibliographystyle{amsalpha}
\bibliography{noncompactness-C3}

\end{document}